\DeclareMathOperator{\QM}{QM}
\DeclareMathOperator{\diag}{diag}
\DeclareMathOperator{\Rank}{rank}
\DeclareMathOperator{\Pos}{Pos}
\DeclareMathOperator{\fa}{fa}
\DeclareMathOperator{\cC}{im}
\newcommand{\Sym}{\mathbb{S}}
\def\widebreve{\mathpalette\wide@breve}
\def\wide@breve#1#2{\sbox\z@{$#1#2$}%
     \mathop{\vbox{\m@th\ialign{##\crcr
\kern0.08em\brevefill#1{0.8\wd\z@}\crcr\noalign{\nointerlineskip}%
                    $\hss#1#2\hss$\crcr}}}\limits}
\def\brevefill#1#2{$\m@th\sbox\tw@{$#1($}%
  \hss\resizebox{#2}{\wd\tw@}{\rotatebox[origin=c]{90}{\upshape(}}\hss$}
\newcommand{\x}{x}
\newcommand{\RR}{\mathbb R}
\newcommand{\NN}{\mathbb N}
\newcommand{\ZZ}{\mathbb Z}
\newcommand{\cM}{\mathcal M}
\newcommand{\cH}{\mathcal H}
\newcommand{\benu}{\begin{enumerate}}
\newcommand{\eenu}{\end{enumerate}}
\newcommand{\bop}{\begin{opomba}}
\newcommand{\eop}{\end{opomba}}
\newcommand{\Bor}{\mathrm{Bor}}
\newcommand{\tr}{\mathrm{tr}}
\newtheorem{theorem}{Theorem}[section]
\newtheorem{lemma}[theorem]{Lemma}
\newtheorem{proposition}[theorem]{Proposition}
\newtheorem*{problem*}{Problem}
\theoremstyle{definition}
\newcommand{\mbf}{\mathbf}
\theoremstyle{remark}
\newtheorem{remark}[theorem]{Remark}
\numberwithin{equation}{section}
\newcommand{\pushright}[1]{\ifmeasuring@#1\else\omit\hfill$\displaystyle#1$\fi\ignorespaces}
\newcommand{\pushleft}[1]{\ifmeasuring@#1\else\omit$\displaystyle#1$\hfill\fi\ignorespaces}
\newcommand{\CB}[1]{\begin{color}{black}#1\end{color}}
\author[S. Sun]{Shengding Sun${}^{1}$}
\address{Shengding Sun,
Department of Applied Mathematics and Theoretical Physics, University of Cambridge, Cambridge, United Kingdom.}
\email{ss3104@cam.ac.uk, s.sun28@lse.ac.uk}
\thanks{${}^1$Supported by UKRI Horizon Europe EP/X032051/1.}
\author[A. Zalar]{Alja\v z Zalar${}^{2}$}
\address{Alja\v z Zalar, 
Faculty of Computer and Information Science, University of Ljubljana  \& 
Faculty of Mathematics and Physics, University of Ljubljana  \&
Institute of Mathematics, Physics and Mechanics, Ljubljana, Slovenia.}
\email{aljaz.zalar@fri.uni-lj.si}
\thanks{${}^2$Supported by the ARIS (Slovenian Research and Innovation Agency)
research core funding No.\ P1-0288 and grants No.\ J1-50002, J1-6011.}
\subjclass[2020]{Primary 13J30, 47A57; 
Secondary 14P10, 44A60, 47A56.}
\date{\today}
\keywords{
Positive matrix polynomials, Nichtnegativstellensatz,
real algebraic geometry, truncated matrix-valued moment problem, moment matrix}
\begin{document}

\numberwithin{equation}{section}

\dottedcontents{section}[3.8em]{}{2.3em}{.4pc} 
\dottedcontents{subsection}[6.1em]{}{3.2em}{.4pc}
\dottedcontents{subsubsection}[8.4em]{}{4.1em}{.4pc}

\begin{abstract}
The matrix Fej\'er-Riesz theorem characterizes positive semidefinite matrix
	polynomials on the real line.
 In \cite{Zal16} this was extended to the characterization on arbitrary closed semialgebraic sets $K\subseteq \RR$ by
	using matrix quadratic modules from real algebraic geometry.
	In the compact case there is a denominator-free characterization, while in the non-compact case denominators are needed except when $K$ is the whole line, an unbounded interval, a union of two unbounded intervals, and according to a conjecture of \cite{Zal16} also when $K$ is a union of an unbounded interval and a point or a union of two unbounded intervals and a point. In this paper, we confirm this conjecture by solving the truncated matrix-valued moment problem on a union of a bounded interval and a point.
 The presented technique for solving the corresponding \CB{moment problem} can potentially be used to determine degree bounds in the positivity certificates for matrix polynomials on compact sets $K$
 \cite[Theorem C]{Zal16}.
\end{abstract}

\title{Matrix Fej\'er-Riesz type theorem for a union of an interval and a point}

\maketitle


\section{Introduction}

\CB{
The classic Fej\'er-Riesz theorem \cite{Fej}
states that any nonnegative real-valued trigonometric polynomial is a single Hermitian square of a holomorphic trigonometric polynomial of the same degree. The matrix Fej\'er-Riesz theorem is its generalization to matrix-valued polynomials. Its real  version characterizes positive semidefinite matrix polynomials on the real line: 

\begin{theorem} 
\label{thm:Fejer-Riesz}
Let $p\in \mathbb{N}$ and let
$
F(x)=\sum_{i=0}^{2n}F_i x^i
$
be a $p\times p$ matrix polynomial whose coefficients $F_i$ are real symmetric matrices, with $F_{2n}\neq 0$.  
Assume that $F(x)$ is positive semidefinite for every $x\in \mathbb{R}$. Then there exist $p\times p$ matrix polynomials  
\[
H(x)=\sum_{i=0}^{n}H_i x^i
\quad\text{and}\quad
G(x)=\sum_{i=0}^{n}G_i x^i,
\]
whose coefficients $H_i,G_i$ are real symmetric matrices, such that  
\begin{equation}
\label{thm:Fejer-Riesz-eq}
F(x)=H(x)^T H(x)+G(x)^T G(x),
\end{equation}
where
\(
H(x)^T=\sum_{i=0}^{n}H_i^T x^i
\)
and
\(
G(x)^T=\sum_{i=0}^{n}G_i^T x^i.
\)
\end{theorem}

For various proofs of this result, we refer the reader to  
\cite{Gohberg-Krein, Jakubovic, Popov, Rosenblatt, Rosenblum}.  
Among these, the formulation in \cite[Appendix B]{Popov} is the most informative, as it shows the following: for every factorization of $\det F(x)$ of the form  
\[
\det F(x)=f(x)^\ast f(x),
\]
where $f(x)=\sum_{i=0}^{np} f_i x^i\in \mathbb{C}[x]$ and 
\(f(x)^\ast=\sum_{i=0}^{np}\overline{f_i}\,x^i\), there exist polynomials $H(x)$ and $G(x)$ satisfying \eqref{thm:Fejer-Riesz-eq} and  
\[
\det\!\bigl(H(x)+\mathfrak{i}\, G(x)\bigr)=f(x),
\]
where $\mathfrak{i}$ denotes the imaginary unit.
}

In \cite{Zal16} the second named author has extended a characterization to arbitrary closed semialgebraic sets $K\subseteq \RR$ by using matrix quadratic modules from real algebraic geometry. Except for two special cases, the following characterizations were obtained: 

\begin{theorem}[Theorem C and D in \cite{Zal16}]\label{thm:Zal16}
    Let $p$ be the size of matrix and $K\subseteq \RR$ be a closed semialgebraic set. 
    \begin{enumerate}
        \item If $K$ is compact, then the matrix quadratic module is saturated for every $p\ge 1$ and \emph{saturated} description of $K$. 
        \item If $K=\RR$ or $K$ is a union of two unbounded intervals, then the matrix quadratic module is saturated for every $p\ge 1$ and the \emph{natural} description of $K$. 
        \item If $K$ is not compact, and is not
        \begin{enumerate}[(i)]
            \item $\RR$ or a union of two unbounded intervals,
            \item\label{conj:case-1} a union of an unbounded interval and a point, or
            \item\label{conj:case-2} a union of two unbounded intervals and a point,
        \end{enumerate}
        Then the matrix preordering is NOT saturated for every $p\ge 2$ and any valid description of $K$. 
    \end{enumerate}
\end{theorem}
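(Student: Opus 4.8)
To prove Theorem~\ref{thm:Zal16} I would treat the three parts separately, in each case reducing the matrix statement to a scalar one-variable fact together with a step that controls denominators. \textbf{Part (1), compact $K$.} A compact basic closed semialgebraic subset of $\RR$ is a finite disjoint union of closed bounded intervals (some possibly degenerate, i.e.\ points). I would first reduce to a single interval: since the components are pairwise separated, one can build inside the quadratic module a partition of unity subordinate to them and patch local certificates into a global one, using that the given description is \emph{saturated}, so each generator stays saturated when restricted to a component. For a single interval $[a,b]$ I would pass to the real line by a M\"obius substitution $x=\varphi(t)$ carrying $[a,b]$ onto a compact arc, turning a psd matrix polynomial on $[a,b]$ into a psd matrix rational function with controlled poles, apply the matrix Fej\'er--Riesz theorem, and transport the factorization back, absorbing the M\"obius denominators into the generators $x-a$ and $b-x$; the degenerate case is linear algebra.

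\textbf{Part (2), $K=\RR$ or a union of two unbounded intervals.} For $K=\RR$ this is precisely the matrix Fej\'er--Riesz theorem: a psd matrix polynomial equals $Q^*Q$ and so lies in the sum of squares, hence in the natural quadratic module. For $K=(-\infty,a]\cup[b,\infty)$ with natural generator $g=(x-a)(x-b)$, I would work over the rational function field: since $K$ is infinite, a psd-on-$K$ matrix polynomial $P$ is positive semidefinite over $\RR(x)$, hence $P=C^*\diag(d_1,\dots,d_p)C$ with $C\in\mathrm{GL}_p(\RR(x))$ and each $d_i\ge 0$ on $K$; by the classical description of polynomials nonnegative on two unbounded intervals each $d_i$ lies in the scalar quadratic module $\{\sigma_0+g\sigma_1\}$. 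The decisive task is then to clear the denominators of $C$, which I would do by combining the Smith normal form of $C$ over $\RR[x]$ with the simultaneous factorization of a matrix polynomial and its determinant from \cite{Popov}, exploiting that on two unbounded intervals surplus mass can be sent to $\pm\infty$ so that the resulting identity is genuinely polynomial.

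\textbf{Part (3), the negative result.} Call a non-compact finite union of intervals and points \emph{good} if it has at most one bounded connected component and that component, if present, is a single point --- exactly the excluded cases (i)--(iii) --- and \emph{bad} otherwise. I would reduce, by monotonicity and a localization near the "extra" part, to two minimal bad models: (A) a set with a non-degenerate bounded interval component, modelled on $[0,1]\cup[2,\infty)$; (B) a set with two distinct bounded components, modelled on $\{0\}\cup\{1\}\cup[2,\infty)$. For each I would argue by duality: it suffices, by Hahn--Banach, to exhibit a linear functional $L$ on matrix polynomials of bounded degree whose matrix moment and localizing matrices are all positive semidefinite --- so that $L\ge 0$ on the truncated matrix preordering --- but which admits no representing $p\times p$ matrix measure supported on $K$. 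Concretely one constructs a positive semidefinite \emph{matrix Hankel} datum on the model set rigid enough to force any representing measure to place mass of a prescribed $\Sym_p^+$-direction at a point of the bounded component, which the moment constraints forbid the unbounded part to supply --- the point being that for $p\ge 2$, unlike in the scalar case, one cannot "escape to infinity". Finally I would transfer the counterexample to a general bad $K$ and description since the obstruction is localized near the bad sub-configuration, and pass back from the closure of the preordering, noting that in one variable these quadratic modules are closed for the sets in question.

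\textbf{Main obstacle.} In parts (1)--(2) the substance is the denominator-clearing step --- turning the $\RR(x)$-diagonalization into an honest polynomial identity with SOS matrix coefficients --- which is where compactness (Archimedeanity) enters in (1) and the special geometry of $\RR$ / two unbounded intervals enters in (2); for the remaining non-compact $K$ this step provably fails, which is what (3) asserts. In part (3) the real difficulty is the \emph{certification}, i.e.\ actually producing the positive-but-non-representable (truncated) matrix moment matrix for each bad model --- precisely the circle of truncated matrix moment techniques this paper refines in order to settle the borderline cases (ii)--(iii) left open in \cite{Zal16}.
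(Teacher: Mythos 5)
The paper does not actually prove this statement: Theorem~\ref{thm:Zal16} is quoted as background from Theorems~C and~D of~\cite{Zal16}, and the text only gives a one--sentence summary of the strategy used there, namely induction on the size $p$ of the matrices, with the scalar base case supplied by Kuhlmann--Marshall \cite[Theorem~2.2]{K-M}, Schm\"udgen's diagonalization of matrix polynomials \cite[\S4.3]{Sch} for the inductive step, and, in the compact case, the elimination of the resulting denominators by Scheiderer \cite[Proposition~2.7]{Scheiderer} (which relies on Stone--Weierstra{\ss} and therefore loses degree bounds). Your outline is a genuinely different route, and several of its steps have gaps.

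In Part~(1), the ``partition of unity inside the quadratic module'' is not available in general: a matrix quadratic module is not a ring, and one cannot simply multiply a local certificate by a bump function and stay in $\QM^{(p)}_S$. The mechanism that actually glues local information in the compact case is precisely Scheiderer's result, which is the step your sketch tries to avoid. Likewise, the M\"obius/Fej\'er--Riesz argument for a single interval is essentially sound (it is the classical substitution trick), but it produces a certificate only for the interval with its saturated generators $\x-a$, $b-\x$; it does not by itself handle a disjoint union of intervals and points, where the saturated description has more generators whose interactions must be controlled.

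In Part~(2), the claim that you can clear denominators via the Smith normal form of $C$ over $\RR[\x]$ is the main gap. The Smith factorization $C=USV$ with $U,V$ unimodular is not compatible with the \emph{symmetric} congruence $C^*\diag(d_i)C$ you started from, so it does not reduce the problem to scalar denominators in a usable way; and Popov's simultaneous factorization is a statement about psd matrix polynomials on all of $\RR$, not a denominator--clearing device for the preordering on $(-\infty,a]\cup[b,\infty)$. What Schm\"udgen's diagonalization actually produces is an identity of the form $q^2F=\sum \sigma_i g_i$ with $q$ a polynomial (a product of leading principal minors) and $\sigma_i$ matrix SOS; the substance in~\cite{Zal16} is showing that, for $K=\RR$ or a union of two unbounded intervals (and, by the present paper, the two borderline cases), this $q^2$ can be cancelled without blowing up degrees. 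Your sketch leaves precisely this cancellation unjustified.

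In Part~(3), the dual strategy via Hahn--Banach separation and a non--representable psd matrix Hankel datum is reasonable in spirit (it is in fact the philosophy the present paper uses to prove the \emph{positive} borderline cases), but two things are missing. First, the ``real difficulty'' you defer --- actually constructing such a datum for each bad model --- is the content of the proof; in~\cite{Zal16} the negative result is obtained by exhibiting explicit $2\times 2$ matrix polynomials, psd on $K$, that lie outside the preordering, not by a moment--problem construction. Second, Hahn--Banach only separates from the \emph{closure} of the preordering; Proposition~\ref{prop:matrix-quadratic-closed} gives closedness of the truncated module $\QM^{(p)}_{S,d}$ when $K_S$ has nonempty interior, but turning a degree--by--degree separation into ``$F\notin\QM^{(p)}_S$'' for the full (untruncated) preordering on a non-compact $K$ needs an extra argument that your outline does not supply.

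\end{document}
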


The main technique used in \cite{Zal16} to establish the characterizations is induction on the size of the matrices, where the base case is the scalar result of Kuhlmann and Marshall \cite[Theorem 2.2]{K-M}, the diagonalization method for matrix polynomials of Schm\"udgen \cite[\S4.3]{Sch} and in the compact case the elimination of the denominator by a result of Scheiderer \cite[Proposition 2.7]{Scheiderer}. In the diagonalization method, the degrees of the polynomials can grow exponentially with the size of the matrices, while \cite[Proposition 2.7]{Scheiderer} uses the Stone-Weierstra{\ss} theorem, in which the trace of the degrees is lost.

The \emph{natural} description is a special case of saturated descriptions. Thus Theorem \ref{thm:Zal16} states that except for the two unresolved cases \ref{conj:case-1} and \ref{conj:case-2}, the natural description is the best possible in terms of saturation of matrix quadratic module and preordering. 
It was conjectured in \cite{Zal16} that for cases \ref{conj:case-1} and \ref{conj:case-2}, the matrix preordering is saturated for the natural description and every $p\ge 1$.

When the preordering is not saturated for any valid description, there is a weaker characterization with denominators \cite[Theorem D]{Zal16}. It turns out that by an appropriate substitution of variables, both unresolved cases \ref{conj:case-1} and \ref{conj:case-2} above are equivalent. Moreover, they are equivalent to the case of a union of a compact interval and a point, where the degrees in the algebraic certificate of positivity are bounded by the degree of the given positive semidefinite matrix polynomial. 
\CB{The details of these claims are presented in Section \ref{sec:psatze-one-unbounded-interval}; here, we only outline the basic idea.} After applying a suitable substitution of the
variables $\phi$, a union of an unbounded interval and a point $K$ becomes a union of a compact interval and a point $\phi(K)$, while the given matrix polynomial $F$, positive semidefinite on $K$, becomes a matrix polynomial $G=\phi(F)$, positive semidefinite on $\phi(K)$. By the form of the polynomials in the natural description of $K$, it is clear the bounds on the degrees of the summands in the representation of $F$ as an element of the matrix preordering are bounded by the degree of $F$ and hence the same is true for the corresponding representation of $G$ on $\phi(K)$. Conversely, any representation of $G$ as an element of the matrix preordering corresponding to the natural description of $\phi(K)$, yields a representation of $F$ as an element of the matrix preordering corresponding to the natural description of $K$ only if the degrees of summands are bounded by the degree of $G$. Otherwise non-trivial denominators would be present in the representation. 

In this article we answer the conjecture 
above affirmatively and conclude all univariate cases of matrix Fej\'er-Riesz theorem by studying the dual side of solving the corresponding univariate truncated matrix-valued moment problem on $K$.
\CB{For 
$K$ being an interval (bounded or unbounded)}, the problem is well understood and has been solved using tools from several fields, including operator theory, complex analysis and linear algebra \cite{And70,DS02,CDFK06,DFKM10,BW11}.
Very recently, the study of operator-valued and multivariate \CB{truncated matrix-valued moment problems} has attracted interest among several authors \cite{KW13,Kim14,MS16,Kim22,KT22,MS23+,MS23++,CEAZ25}.
In particular, by \cite[Corollary 5.2]{MS23++}, the
\CB{truncated matrix-valued moment problem} on a compact set $K$ admits a solution if only if the corresponding linear functional is positive on every matrix polynomial of bounded degree, positive semidefinite on $K$.
Usually, this duality is exploited in the scalar case in the direction $(\Leftarrow)$ using certificates of positivity for polynomials. However, our motivation is to use the implication $(\Rightarrow)$ and obtain matricial sum-of-squares certificates by solving the \CB{truncated matrix-valued moment problem} for a given $K$.
\medskip

\subsection{Main results and reader's guide}

Given a $p\times p$ univariate matrix polynomial $H(\x)=\sum_{i=0}^n H_i\x^i$,
where $H_i$ are real matrices,
we call an expression of the form $H(\x)^TH(\x)$
a \textbf{symmetric square},
where $H(\x)^T=\sum_{i=0}^{n} H_i^T \x^i$. An algebraic certificate of positivity for matrix polynomials on the union of a bounded interval and a point is indeed the best possible in terms of degree bounds, which solves the conjecture from \cite{Zal16}:

\begin{theorem}
\label{thm:bounded-interval}
Let $K=\{a\}\cup [b,c]$, $a,b,c\in \RR$, $a<b<c$, and
    $F(\x)=\sum_{i=0}^n F_i \x^i$
    be a $p\times p$ matrix polynomial with coefficients $F_i$ being real symmetric matrices and $F_n$ a nonzero matrix.
Assume that $F(x)$ is positive semidefinite for every $x\in K$.
 Then there are 
 $p\times p$ real matrix polynomials $G_0(\x),G_1(\x),G_2(\x),G_3(\x)$,
 each being a sum of at most two symmetric squares,
    such that:
\smallskip
\begin{enumerate}[leftmargin=0.7cm]
\item If $n$ is even, then
        $$
        F(\x)=G_0(\x)+(\x-a)(\x-b)G_1(\x)+(\x-a)(c-\x)G_2(\x),
        $$
        where the degree of each summand is bounded above by the degree of $F$.
\smallskip
\item If $n$ is odd, then
\begin{align*} 
    F(\x)&=(\x-a)G_0(\x)+(c-\x)G_1(\x)+(\x-a)^2(\x-b)G_2(\x)\\
    &\hspace{5cm}+(\x-a)(\x-b)(c-\x)G_3(\x),
\end{align*}
where the degree of each summand is bounded above by the degree of $F$.
\end{enumerate}
\end{theorem}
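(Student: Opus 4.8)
The plan is to pass to the moment (dual) side. Write $V:=\Sym_p\RR[\x]_{\le n}$ for the space of symmetric $p\times p$ matrix polynomials of degree at most $n$, let $\mathcal P_{K,n}\subseteq V$ be the (closed, convex) cone of those that are positive semidefinite on $K$, and let $\mathcal C_n\subseteq V$ be the convex cone of all matrix polynomials of the shape displayed in the statement, where each $G_i$ is now allowed to be an \emph{arbitrary} finite sum of symmetric squares and every summand has degree $\le n$; thus $\mathcal C_n$ is a truncated matrix quadratic module when $n$ is even and a truncated matrix preordering when $n$ is odd. Since the multipliers are $\ge 0$ on $K$ we have $\mathcal C_n\subseteq\mathcal P_{K,n}$, and the theorem is the reverse inclusion. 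Granting that $\mathcal C_n$ is closed (addressed at the end), biduality reduces the theorem to the inclusion $\mathcal C_n^{\,\ast}\subseteq\mathcal P_{K,n}^{\,\ast}$. By \cite[Corollary~5.2]{MS23++}, valid because $K$ is compact, $\mathcal P_{K,n}^{\,\ast}$ is exactly the cone of truncated $K$-moment functionals; and unpacking the definition of a symmetric square, $\mathcal C_n^{\,\ast}$ is the set of linear functionals $L$ on $V$ whose moment matrix $\mathcal M_L(m)$ and whose localizing matrices for $(\x-a)(\x-b)$ and $(\x-a)(c-\x)$ (resp.\ for $\x-a$, $c-\x$, $(\x-a)^2(\x-b)$ and $(\x-a)(\x-b)(c-\x)$ when $n$ is odd) are all positive semidefinite. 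So the theorem is equivalent to the statement that \emph{every such $L$ admits a representing $\Sym_p$-valued measure on $K$}, i.e.\ solvability of the $K$--TMMP.

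To solve the $K$--TMMP I would first normalize $a=0$ by an affine substitution (which preserves $n$ and the form of the certificate), so that $K=\{0\}\cup[b,c]$ with $0<b<c$ and the generators become $\x(\x-b)$, $\x(c-\x)$. Because $0\notin[b,c]$, any representing measure splits as $A\,\delta_0+\nu$ with $A\succeq0$ and $\nu$ a $\Sym_p$-valued measure on $[b,c]$; and since $0^i=0$ for $i\ge1$, the $K$--TMMP for $\beta=(\beta_0,\dots,\beta_n)$ is equivalent to exhibiting $A\succeq0$ such that the residual sequence $(\beta_0-A,\beta_1,\dots,\beta_n)$ has a representing measure on $[b,c]$ — a question controlled by the matrix truncated moment problem on a bounded interval (cf.\ \cite{DS02,CDFK06,BW11}). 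Admissibility of a candidate $A$ then splits into two semidefinite conditions that feel $A$ only through a single $p\times p$ block: a Schur-complement computation rewrites the moment-matrix condition as $0\preceq A\preceq\beta_0-v^T\bigl(\mathcal M_{L,\x^2}(m-1)\bigr)^{+}v$, where $v$ stacks $\beta_1,\dots,\beta_m$ and the upper bound is $\succeq0$ because $\mathcal M_L(m)\succeq0$, while the identity $(\x-b)(c-\x)=\tfrac{b}{c-b}\,\x(\x-b)+\tfrac{c}{c-b}\,\x(c-\x)-bc$ rewrites the localizing condition as a lower L\"owner bound involving the psd matrices $\mathcal M_{L,\x(\x-b)}(m-1)$ and $\mathcal M_{L,\x(c-\x)}(m-1)$. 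The heart of the matter is then to show these two bounds are compatible, i.e.\ that the feasible set of atoms $A$ is non-empty; the odd case runs along the same lines, now with the four localizing matrices above and the Stieltjes-type description $[b,c]=\{\x-b\ge0,\ c-\x\ge0\}$.

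I expect this construction of the atom — and the accompanying finitely-atomic bookkeeping in degenerate configurations — to be the main obstacle. When $\mathcal M_L(m)$ or one of the localizing matrices is singular, the representing measure is forced to be finitely atomic, and one must run a flat/recursive extension in its matrix form (\`a la Curto--Fialkow) that extends the moment matrix while keeping every localizing matrix positive semidefinite and placing no atom in the gap $(0,b)$ or outside $[0,c]$. A subtlety to be careful about is that one is only given positivity of $\mathcal M_{L,\x(\x-b)}(m-1)$ and $\mathcal M_{L,\x(c-\x)}(m-1)$ — the dual data of a quadratic module — and \emph{not} of the product localizing matrix $\mathcal M_{L,\x^2(\x-b)(c-\x)}(m-2)$, which is the dual datum of the full preordering; so the compatibility of the two bounds on $A$ is not automatic and has to be extracted from $\mathcal M_L(m)$ together with the two given localizing matrices.

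Finally, the two deferred points. For closedness of $\mathcal C_n$: if $F^{(k)}=G_0^{(k)}+(\x-a)(\x-b)G_1^{(k)}+(\x-a)(c-\x)G_2^{(k)}\to F$, then on the compact set $K$ every summand is $\succeq0$, hence $G_0^{(k)}$ is bounded on $K$ and, for small $\varepsilon>0$, $G_1^{(k)}$ and $G_2^{(k)}$ are bounded on the subintervals of $[b,c]$ where $\x(\x-b)\ge\varepsilon$ resp.\ $\x(c-\x)\ge\varepsilon$; being matrix polynomials of bounded degree on infinite sets their coefficients are bounded, and a subsequential limit gives $F\in\mathcal C_n$ (the odd case is identical). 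For the refinement to \emph{at most two} symmetric squares: a finite sum of symmetric squares is $\succeq0$ on all of $\RR$, so by the complex matrix Fej\'er--Riesz theorem \cite{Rosenblum} each $G_i$ equals $C_i^{\ast}C_i$ for some complex matrix polynomial $C_i$ with $\deg C_i=\tfrac12\deg G_i$; writing $C_i=P_i+\mathrm{i}\,Q_i$ with $P_i,Q_i$ real and using that $G_i$ is real yields $G_i=P_i^TP_i+Q_i^TQ_i$, a sum of two real symmetric squares of the prescribed degree. Assembling these steps produces exactly the two representations in the statement.
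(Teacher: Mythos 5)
Your overall framework is exactly the paper's: dualize via Proposition~\ref{prop:140525} (equivalently \cite[Corollary 5.2]{MS23++}), reduce the positivity certificate to solvability of the $K$--TMMP for $K=\{a\}\cup[b,c]$, and handle closedness of the truncated cone and the ``at most two symmetric squares'' refinement exactly as the paper does (via Proposition~\ref{prop:matrix-quadratic-closed} and the matrix Fej\'er--Riesz theorem, respectively). So the reductions are sound. However, the proposal stops precisely at the step that constitutes the paper's core contribution --- actually \emph{solving} the $K$--TMMP --- and the strategy you sketch for that step has a genuine gap that the paper avoids by taking a different route.

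Concretely, you propose to choose $A\succeq 0$ so that the residual sequence $(\beta_0-A,\beta_1,\dots,\beta_n)$ has a representing measure on $[b,c]$, and you correctly observe that admissibility of $A$ amounts to an intersection of two L\"owner intervals coming from $\cM_m\succeq 0$ (upper bound on $A$) and the localizing conditions (lower bound). You flag, rightly, that non-emptiness of this intersection is not obvious from the hypotheses --- especially since one is only handed the quadratic-module localizing matrices $\cH_{\x(\x-b)}$ and $\cH_{\x(c-\x)}$, not the preordering one --- but you leave it unresolved. This is not a small detail: it is the whole theorem. The paper's Theorem~\ref{main-theorem-1} does \emph{not} try to force the residual to be supported on $[b,c]$. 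Instead, it subtracts only enough mass at the isolated point to make the moment matrix \emph{coflat} (i.e.\ it replaces $\Gamma_0$ by the smallest $\widetilde\Gamma_0$ with $\widetilde\cM_m\succeq 0$, so that $\Rank\widetilde\cM_m=\Rank\cH_{\x^2}(m-1)$), and then invokes the key technical result, Proposition~\ref{lem:aux-3}: under the given positivity of $\cH_{\x(c-\x)}(m-1)$, coflatness of $\cM_m$ forces \emph{flatness} $\Rank\widetilde\cM_m=\Rank\widetilde\cM_{m-1}$. The matrix $K$-flat extension theorem (Theorem~\ref{main-theorem-060525}) then produces a finitely atomic representing measure on $K$ (possibly including mass at $a$), and one adds back $(\Gamma_0-\widetilde\Gamma_0)\delta_a$. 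This construction automatically resolves the ``compatibility of the two L\"owner bounds'' issue: after the coflat subtraction, the localizing conditions are shown to hold by rank monotonicity arguments (Lemma~\ref{110722-1428} and Lemma~\ref{lem:aux-1}), not by an a priori intersection argument. The odd case is substantially more delicate still --- the paper replaces $\Gamma_0$ to make $\cH_{c-\x}(m)$ coflat, then uses recurrences to extend $\Gamma$ by three further moments $\Gamma_{2m+2},\Gamma_{2m+3},\Gamma_{2m+4}$ and verify positivity/flatness of all four localizing matrices of degree $n+3$ before applying the flat extension theorem; your proposal does not address this at all.

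In short: the dual reduction, closedness argument, and Fej\'er--Riesz finish are correct and match the paper. The missing piece --- the coflatness--implies--flatness mechanism (Section~\ref{sec:coflatness}) together with the matrix $K$-flat extension theorem (Section~\ref{sec:K-flat-extension}) --- is not a ``bookkeeping'' step but the mathematical heart of the proof, and the alternative you gesture at (directly intersecting L\"owner intervals for the atom and then running a Curto--Fialkow extension) is not carried through and, as you yourself note, faces the obstacle that the hypotheses do not include the preordering localizing matrix.
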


Theorem \ref{thm:bounded-interval} implies the following positivity certificate for $K$ being a union of a point and an unbounded interval, which in particular proves that the matrix preordering generated by the natural description of $K$ is saturated.

\begin{theorem}
\label{thm:unbounded-interval}
Let $K=\{a\}\cup [b,\infty)$, $a,b\in \RR$, $a<b$, and
    $F(\x)=\sum_{i=0}^{n}F_i\x^i$
be a $p\times p$ matrix polynomial with coefficients $F_i$ being real symmetric matrices and $F_n$ a nonzero matrix.
Assume that $F(x)$ is positive semidefinite for every $x\in K$.
 Then there exist $p\times p$ real matrix polynomials  $G_0(\x),G_1(\x),G_2(\x), G_3(\x)$,  each being a sum of at most two symmetric squares,
    such that
        $$F(\x)=
        G_0(\x)+(\x-a)G_1(\x)+(\x-a)(\x-b)G_2(\x)+
        (\x-a)^2(\x-b)G_3(\x),
        $$
    where the degree of each summand is bounded above by the degree of $F$.
\end{theorem}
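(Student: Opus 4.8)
The plan is to deduce Theorem~\ref{thm:unbounded-interval} from Theorem~\ref{thm:bounded-interval} by a fractional--linear change of variable that carries $[b,\infty)$ onto a bounded interval while keeping the isolated point $a$ isolated, and then to clear the resulting denominator. First I will fix a real number $d<a$ with $F(d)\neq 0$; this is possible because $F_n\neq 0$ forces some entry of $F$ to be a polynomial of degree $n$, so $\{t\in\RR:F(t)=0\}$ is finite. With $y=\phi(\x):=\tfrac{1}{\x-d}$ and inverse $\x=\psi(y)=d+\tfrac1y$, I set
\[
G(y):=y^{\,n}F(\psi(y))=\sum_{i=0}^{n}F_i\,(dy+1)^{i}\,y^{\,n-i},
\]
a $p\times p$ matrix polynomial with real symmetric coefficients whose leading coefficient is $F(d)\neq 0$; hence $\deg G=n$, and in particular $G$ has the same parity as $F$.

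Since $d<a<b$, the map $\phi$ sends $[b,\infty)$ onto $(0,\tfrac{1}{b-d}]$ and $a$ to $\tfrac{1}{a-d}>\tfrac{1}{b-d}$. For $y\in(0,\tfrac{1}{b-d}]$ one has $G(y)=y^{\,n}F(\phi^{-1}(y))\succeq0$ because $y^{\,n}>0$ and $F$ is psd on $[b,\infty)$; moreover $G(0)=F_n=\lim_{x\to+\infty}x^{-n}F(x)\succeq0$ and $G\bigl(\tfrac{1}{a-d}\bigr)=\bigl(\tfrac{1}{a-d}\bigr)^{n}F(a)\succeq0$. Thus $G$ is psd on $\{\tfrac{1}{a-d}\}\cup[0,\tfrac{1}{b-d}]$, and after the reflection $t\mapsto -t$ the polynomial $\widetilde G(t):=G(-t)$ is a $p\times p$ matrix polynomial with real symmetric coefficients, nonzero leading coefficient, degree $n$, which is psd on $\{a^{*}\}\cup[b^{*},c^{*}]$, where $a^{*}:=-\tfrac{1}{a-d}<b^{*}:=-\tfrac{1}{b-d}<c^{*}:=0$.

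I then apply Theorem~\ref{thm:bounded-interval} to $\widetilde G$ on $\{a^{*}\}\cup[b^{*},c^{*}]$, according to the parity of $n$, to obtain a representation of $\widetilde G$ in the matrix preordering of the natural description of $\{a^{*}\}\cup[b^{*},c^{*}]$ with all summands of degree at most $n$ and every matrix coefficient a sum of at most two symmetric squares. Substituting $t=\tfrac{1}{d-\x}$ and multiplying through by $(\x-d)^{n}$ makes the left-hand side into $F(\x)$, since $(\x-d)^{n}\widetilde G\bigl(\tfrac{1}{d-\x}\bigr)=F(\x)$. On the right-hand side each generator absorbs one factor $\x-d$, and one computes (with $t=\tfrac{1}{d-\x}$)
\[
(\x-d)(t-a^{*})=\tfrac{1}{a-d}(\x-a),\qquad (\x-d)(t-b^{*})=\tfrac{1}{b-d}(\x-b),\qquad (\x-d)(c^{*}-t)=1,
\]
so the generators $t-a^{*}$, $(t-a^{*})(t-b^{*})$, $(t-a^{*})(c^{*}-t)$ (even case of Theorem~\ref{thm:bounded-interval}) and $t-a^{*}$, $c^{*}-t$, $(t-a^{*})^{2}(t-b^{*})$, $(t-a^{*})(t-b^{*})(c^{*}-t)$ (odd case) become, up to the positive scalars $\tfrac{1}{a-d},\tfrac{1}{b-d}$, products of $1$, $\x-a$, $(\x-a)(\x-b)$ and $(\x-a)^{2}(\x-b)$ --- in particular the factor $\x-b$, which is not psd on $K$, never appears alone, always together with $\x-a$. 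The leftover power $(\x-d)^{2m}$ multiplying a coefficient of degree $\le 2m$ distributes into its symmetric squares: that coefficient is a sum of at most two squares $R(t)^{T}R(t)$ with $\deg R\le m$, and $(\x-d)^{m}R\bigl(\tfrac{1}{d-\x}\bigr)$ is again a matrix polynomial of degree $\le m$, while positive scalars are absorbed into squares. Collecting the terms gives
\[
F(\x)=G_0(\x)+(\x-a)G_1(\x)+(\x-a)(\x-b)G_2(\x)+(\x-a)^{2}(\x-b)G_3(\x)
\]
with each $G_j$ a sum of at most two symmetric squares ($G_3=0$ if $n$ is even), and a degree count shows every summand has degree at most $n=\deg F$.

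The Möbius arithmetic and the conversion of symmetric squares are routine; the genuinely delicate point is keeping the degrees under control. A representation of $\widetilde G$ with some summand of degree exceeding $\deg\widetilde G$ would, after clearing $(\x-d)^{n}$, produce non-trivial denominators and ruin the conclusion --- this is precisely why the argument must invoke Theorem~\ref{thm:bounded-interval} \emph{with} its sharp degree bounds, why $d$ is chosen so that $\deg G=n$ (so the even/odd cases of Theorem~\ref{thm:bounded-interval} match the parity of $F$), and why it is essential that in the image of the natural description of $\{a^{*}\}\cup[b^{*},c^{*}]$ the factor $\x-b$ appears only paired with $\x-a$.
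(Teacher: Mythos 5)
Your proof is correct and follows essentially the same M\"obius-substitution strategy that the paper sketches in Remark~\ref{rem:equivalences} for deducing Theorem~\ref{thm:unbounded-interval} from Theorem~\ref{thm:bounded-interval}. You add one worthwhile refinement that the paper's sketch leaves implicit: choosing the base point $d<a$ with $F(d)\neq 0$ so that $\deg G=\deg F$, which is exactly what guarantees the parities match, that the degree bounds from Theorem~\ref{thm:bounded-interval} translate cleanly (in particular $n-\deg s$ is always even for the generators $s$ used, so the leftover power of $(\x-d)$ can be split evenly into the symmetric squares), and that no non-trivial denominators survive after multiplying by $(\x-d)^n$.
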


When $K$ is a union of a point and two unbounded intervals the positivity certificate is the following, implying that the matrix preordering generated by the natural description of $K$ is saturated.

\begin{theorem}
\label{thm:two-intervals}
Let $K=(-\infty,a]\cup\{b\} \cup [c,\infty)$, $a,b,c\in \RR$, $a<b<c$, and
    $F(\x)=\sum_{i=0}^{n}F_i\x^i$
be a $p\times p$ matrix polynomial with coefficients $F_i$ being real symmetric matrices and $F_n$ a nonzero matrix.
Assume that $F(x)$ is positive semidefinite for every $x\in K$.
 Then $n$ is even and there exist $p\times p$ real matrix polynomials  $G_0(\x),G_1(\x),G_2(\x), G_3(\x)$,  each being a sum of at most two symmetric squares,
    such that
    \begin{align*} 
        F(\x)&=
        G_0(\x)+(\x-a)(\x-b)G_1(\x)+(\x-b)(\x-c)G_2(\x)+\\
        &\hspace{5cm}+(\x-a)(\x-b)^2(\x-c)G_3(\x),
    \end{align*}
    where the degree of each summand is bounded above by the degree of $F$.
\end{theorem}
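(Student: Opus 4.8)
The plan is to reduce Theorem~\ref{thm:two-intervals} to the bounded case, Theorem~\ref{thm:bounded-interval}, by a projective (M\"obius) change of variable and then to transport the certificate back. I would first settle the parity claim and record a by-product for later use. If $n$ were odd, then for every $v\in\RR^p$ the scalar polynomial $q_v(\x)=v^TF(\x)v$ would be nonnegative on both unbounded rays $(-\infty,a]$ and $[c,\infty)$; a nonzero scalar polynomial that is nonnegative near both $-\infty$ and $+\infty$ must have even degree (otherwise its leading term drives it to $-\infty$ at one end), so $v^TF_nv=0$. Since $v$ is arbitrary and $F_n$ is symmetric, this forces $F_n=0$, contradicting the hypothesis, so $n$ is even. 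Running the same asymptotic analysis with $n$ even gives in addition $v^TF_nv\ge 0$ for all $v$, i.e.\ $F_n\succeq 0$.

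Next I would fix $d\in(b,c)$ with $F(d)\neq 0$ --- possible since $\x\mapsto\sum_{i,j}(F(\x))_{ij}^2$ is a nonzero polynomial and hence has only finitely many roots --- and pass to a new variable $t$ via $\x=d+\tfrac1t$. Setting
$$G(t):=\sum_{i=0}^n F_i\,t^{\,n-i}(1+dt)^i,$$
one gets a $p\times p$ matrix polynomial with real symmetric coefficients equal to $t^nF\!\big(d+\tfrac1t\big)$ for $t\neq 0$; its coefficient of $t^n$ is $\sum_i F_id^i=F(d)\neq 0$, so $\deg G=n$, and its constant term is $G(0)=F_n\succeq 0$. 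A short computation shows that $t\mapsto d+\tfrac1t$ carries $K'\setminus\{0\}$ onto $K$, where $K'=\{a'\}\cup[b',c']$ with $a'=\tfrac1{b-d}$, $b'=\tfrac1{a-d}$, $c'=\tfrac1{c-d}$, and (using $a-d<0$, $b-d<0$, $c-d>0$ and $a<b$) $a'<b'<0<c'$. Since $n$ is even, $t^n\ge 0$, so $G(t)=t^nF(d+\tfrac1t)\succeq 0$ for $t\in K'\setminus\{0\}$; together with $G(0)=F_n\succeq 0$ this shows $G\succeq 0$ on all of $K'$.

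Then I would apply Theorem~\ref{thm:bounded-interval} (its even case, as $\deg G=n$) to write $G(t)=\widetilde G_0(t)+(t-a')(t-b')\widetilde G_1(t)+(t-a')(c'-t)\widetilde G_2(t)$ with $\widetilde G_0,\widetilde G_1,\widetilde G_2$ sums of at most two symmetric squares and every summand of degree $\le n$ (so $\deg\widetilde G_0\le n$, $\deg\widetilde G_1,\deg\widetilde G_2\le n-2$), and substitute $t=\tfrac1{\x-d}$ into the polynomial identity $F(\x)=(\x-d)^nG\!\big(\tfrac1{\x-d}\big)$. Using
$$\tfrac1{\x-d}-a'=\tfrac{-(\x-b)}{(b-d)(\x-d)},\qquad \tfrac1{\x-d}-b'=\tfrac{-(\x-a)}{(a-d)(\x-d)},\qquad c'-\tfrac1{\x-d}=\tfrac{\x-c}{(c-d)(\x-d)}$$
and cancelling the powers of $\x-d$ against $(\x-d)^n$, the three terms become $(\x-d)^n\widetilde G_0(\tfrac1{\x-d})$, a positive scalar multiple of $(\x-a)(\x-b)\,(\x-d)^{n-2}\widetilde G_1(\tfrac1{\x-d})$, and --- as $-(b-d)(c-d)>0$ --- a positive scalar multiple of $(\x-b)(\x-c)\,(\x-d)^{n-2}\widetilde G_2(\tfrac1{\x-d})$. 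The key technical fact is that for even $m$, if $P$ is a sum of at most two symmetric squares with $\deg P\le m$, then $(\x-d)^mP(\tfrac1{\x-d})$ is again a polynomial of degree $\le m$ and a sum of at most two symmetric squares (push the scalar $(\x-d)^{m/2}$ inside each square), and positive scalar constants can be absorbed similarly. This yields $F(\x)=G_0(\x)+(\x-a)(\x-b)G_1(\x)+(\x-b)(\x-c)G_2(\x)$ with $G_0,G_1,G_2$ sums of at most two symmetric squares and every summand of degree $\le n$, and taking $G_3=0$ gives the asserted form. I expect the main obstacle to be this last bookkeeping --- tracking the powers of $\x-d$ so that each summand stays a genuine polynomial of degree at most $n$ that is still visibly a sum of at most two symmetric squares, and checking the signs of the scalar factors --- while the information that $n$ is even and $F_n\succeq 0$ is exactly what is needed to handle $t=0$, the image of the point at infinity, which lies in the interior of $[b',c']$ though it is not itself a point of $K$.
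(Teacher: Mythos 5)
Your proof is correct, and it follows exactly the route the paper intends: Remark~\ref{rem:equivalences} reduces Theorems~\ref{thm:unbounded-interval} and~\ref{thm:two-intervals} to Theorem~\ref{thm:bounded-interval} via a M\"obius substitution, but the paper only writes out the Theorem~\ref{thm:bounded-interval}~$\Rightarrow$~\ref{thm:unbounded-interval} case and dismisses the rest as ``similar reasoning.'' You have carried out the omitted case carefully --- establishing the parity of $n$ and $F_n\succeq 0$, choosing $d\in(b,c)$ with $F(d)\neq 0$ so that $\deg G=n$, checking that the image set $K'=\{a'\}\cup[b',c']$ has the right ordering, handling $t=0$ (the image of infinity) via $G(0)=F_n\succeq 0$, and verifying that $(\x-d)^m$ can be pushed inside the squares and that the two scalar prefactors $((a-d)(b-d))^{-1}$ and $-((b-d)(c-d))^{-1}$ are positive --- so your write-up is a complete instance of the paper's own sketch rather than a different approach.
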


The paper is organized as follows. In Section \ref{sec:notation}
we introduce the notation and some preliminary results. We state the truncated matrix-valued moment problem and establish the connection with positive matrix polynomials
(see Proposition \ref{prop:140525}).
In Section \ref{sec:K-flat-extension} we extend the flat extension theorem for a closed semialgebraic set $K$ in $\RR$ from the scalar to the matrix case
(see Theorem \ref{main-theorem-060525}). In Section \ref{sec:coflatness}
we prove the main technical result, Proposition \ref{lem:aux-3},
which allows us to manipulate the value of the matrix moment of degree 0, and is used in Section \ref{sec:union-interval-point} to solve the truncated matrix-valued moment problem on the union of a compact interval and a point (see Theorem \ref{main-theorem-1}).
This result together with Proposition \ref{prop:140525} then implies Theorem \ref{thm:bounded-interval}.
\CB{Finally, in Section \ref{sec:psatze-one-unbounded-interval}, we prove Theorems \ref{thm:unbounded-interval} and \ref{thm:two-intervals}
using Theorem \ref{thm:bounded-interval} and elementary algebraic manipulations.}

We mention at the end that the approach presented in this paper could be used to provide degree bounds in the positivity certificates for matrix polynomials on a compact set $K$
(see \cite[Theorem C]{Zal16}),
if Theorem \ref{main-theorem-1} can be extended to a given $K$.


\section{Notation and preliminaries}
\label{sec:notation}

Let $d\in \NN\cup\{0\}$, $p\in \NN$. We denote by $M_p(\RR)$ the set of $p\times p$ real matrices and by $\Sym_p$ its subset of symmetric matrices.
For $A\in \Sym_p$ the notation $A\succeq 0$ (resp.\ $A\succ 0$) means $A$ is positive semidefinite (resp.\ positive definite).
We use $\Sym^{\succeq 0}_p$ for the subset  of all \CB{positive semidefinite} matrices in $\Sym_p$.
Let $\tr$ denote the trace and 
$\langle\cdot,\cdot\rangle$ the usual Frobenius inner product on $M_p(\RR)$, i.e., $\langle A, B\rangle =\tr(A^TB).$

Let $\RR[\x]_{\leq d}$ stand for the vector space of univariate polynomials of degree at most $d$. 
Let $M_p(\RR[\x])$ be a set of all $p \times p$ 
matrix polynomials over $\RR[\x]$.
We say $F(\x) \in M_p(\RR[\x])$ is \textbf{symmetric} if 
$F(\x) = F(\x)^T$. 
    We write $$\CB{\Sym_p[\x]_{\leq d}:=\Big\{\sum_{k=0}^{d}A_k\x^k\colon A_0,\ldots,A_d\in \Sym_p\Big\}}$$
 for the set of all symmetric matrix polynomials from $M_p(\RR[\x])$ \CB{of degree at most $d$}.  

\subsection{Positive matrix polynomials and matrix quadratic module}
A matrix polynomial $F(\x) \in \CB{\Sym_p[\x]}$ is positive semidefinite in 
$x_0 \in \RR$ if $v^TF(x_0)v \geq 0$ for
 every $v \in \RR^p$. 
 We denote by $\sum M_p(\RR[\x])^2$ the set of sums of symmetric squares $H(\x)^T H(\x)$, where  $H(\x)\in M_p(\RR[\x])$. Note that by the matrix Fej\'er-Riesz theorem \CB{(see Theorem \ref{thm:Fejer-Riesz} above)}, 
$\sum M_p(\RR[\x])^2$ is equal to the set of all sums of at most two symmetric squares.

Let $K\subseteq \RR$ be a closed nonempty set.
We denote by $\Pos_{d}^{(p)}(K)$
the set of $p\times p$ matrix polynomials over $\RR[\x]_{\leq d}$, \CB{positive semidefinite} on $K$, i.e., 
$$
    \Pos_{d}^{(p)}(K)
    :=
    \{
    F\in \Sym_p(\RR[\x]_{\leq d}) \colon   
        F(x)\succeq 0 \text{ for every }x\text{ in }K
    \}.
$$ 
We call every $F\in \Pos_{d}^{(p)}(K)$ a \textbf{$K$--positive matrix polynomial}.

Let $S\subset \RR[\x]$ be a finite set.
We denote by 
\begin{equation*}
    K_S:=\{x\in\RR\colon f(x)\geq 0\;\text{for each }f\in S\}
\end{equation*}
the semialgebraic set generated by $S$.
The \textbf{matrix quadratic module generated by $S$} in $M_p(\RR[\x])$ is defined by
	\begin{eqnarray*}
		\QM^{(p)}_{S} 
			&:=& 
				\Big\{\sum_{
                s\in S} \sigma_s s\colon \sigma_s\in 
				\sum M_p(\RR[\x])^2\;\text{for each}\;s\Big\}.
	\end{eqnarray*}
For $d\in \NN\cup\{0\}$ we define the set 
	\begin{align*} 
		\QM^{(p)}_{S,d}
            &:=
            \Big\{\sum_{s\in S} 
            \sigma_s s\colon 
			\sigma_s \in \sum M_p(\RR[\x])^2
            \text{ and }\deg(\sigma_s s)\leq d\; \text{for each}\;s\Big\}.
	\end{align*}
	We call $\QM^{(p)}_{S,d}$ the \textbf{$d$-th truncated matrix quadratic module generated by $S$}.


\begin{proposition}
    \label{prop:matrix-quadratic-closed} 
    If $K_S$ has a nonempty interior, then
    $\QM^{(p)}_{S,d}$ is closed in $\CB{\Sym_p[x]_{\leq d}}$.
\end{proposition}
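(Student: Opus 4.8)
The plan is to realise $\QM^{(p)}_{S,d}$ as the image of a Euclidean space under a polynomial map and to prove that this image is closed by a compactness argument that uses the nonempty interior of $K_S$ in an essential way. First I would dispose of degenerate cases: a generator $s=0$ or one with $\deg s>d$ contributes only $0$ to $\QM^{(p)}_{S,d}$ (in the second case because $\deg(\sigma_s s)\le d$ forces $\sigma_s=0$), and if discarding such $s$ leaves $S=\emptyset$ then $\QM^{(p)}_{S,d}=\{0\}$ is trivially closed; so I may assume every $s\in S$ is nonzero with $\deg s\le d$.

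Step 1 (parametrisation). By the matrix Fej\'er--Riesz theorem every $\sigma\in\sum M_p(\RR[\x])^2$ is a sum of at most two symmetric squares $H^TH$, and if $\deg(H^TH)\le e$ then $\deg H\le\lfloor e/2\rfloor$, since the leading coefficient $H_k^TH_k$ of $H^TH$ vanishes only when $H_k=0$. Hence, setting $m_s:=\lfloor(d-\deg s)/2\rfloor\ge 0$, the continuous map
$$\Phi\colon\ \prod_{s\in S}\big(M_p(\RR[\x]_{\le m_s})\big)^2\ \longrightarrow\ \Sym_p(\RR[\x]_{\le d}),\qquad \big((H_{s,1},H_{s,2})\big)_{s\in S}\ \longmapsto\ \sum_{s\in S}\big(H_{s,1}^TH_{s,1}+H_{s,2}^TH_{s,2}\big)\,s,$$
has image exactly $\QM^{(p)}_{S,d}$, so it suffices to show $\Ran\Phi$ is closed. (A Carath\'eodory argument works equally well in place of Fej\'er--Riesz, at the cost of a larger bounded number of squares.)

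Step 2 (uniform a priori bound). Since $K_S$ has nonempty interior and $\prod_{s\in S}s$ has only finitely many zeros, I can fix a nondegenerate closed interval $[a,b]\subseteq\operatorname{int}K_S$ on which every $s\in S$ is bounded below by some $\delta>0$. Let $F_j=\Phi\big((H^{(j)}_{s,i})\big)\to F$ in $\Sym_p(\RR[\x]_{\le d})$ and put $\sigma^{(j)}_s:=H^{(j)T}_{s,1}H^{(j)}_{s,1}+H^{(j)T}_{s,2}H^{(j)}_{s,2}\succeq0$. For $x\in[a,b]$ each summand $\sigma^{(j)}_s(x)s(x)$ is psd, so $0\preceq\sigma^{(j)}_s(x)s(x)\preceq F_j(x)$ and therefore $\|\sigma^{(j)}_s(x)\|_{\mathrm{op}}\le\delta^{-1}\|F_j(x)\|_{\mathrm{op}}$; the right-hand side is bounded uniformly in $j$ and in $x\in[a,b]$ because the coefficients of $F_j$ converge. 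From $\sigma^{(j)}_s\succeq H^{(j)T}_{s,i}H^{(j)}_{s,i}$ one gets $\|H^{(j)}_{s,i}(x)\|_{\mathrm{op}}^2\le\delta^{-1}\|F_j(x)\|_{\mathrm{op}}$ for $x\in[a,b]$, and since $\sup_{x\in[a,b]}\|{\cdot}(x)\|_{\mathrm{op}}$ is a norm on the finite-dimensional space $M_p(\RR[\x]_{\le m_s})$, the sequences $(H^{(j)}_{s,i})_j$ are bounded.

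Step 3 (limit) and the obstacle. Passing to a subsequence, $H^{(j)}_{s,i}\to H_{s,i}$ for all $s,i$, and continuity of $\Phi$ gives $F=\Phi\big((H_{s,i})\big)\in\Ran\Phi=\QM^{(p)}_{S,d}$; hence $\QM^{(p)}_{S,d}$ is closed. I expect Step 2 to be the only real difficulty: the naive approach of writing $\QM^{(p)}_{S,d}=\sum_{s\in S}C_s$ with $C_s=\{\sigma s:\sigma\in\sum M_p(\RR[\x])^2,\ \deg(\sigma s)\le d\}$ (each $C_s$ is closed, being the image of the closed cone of bounded-degree matrix squares under the injective linear map $\sigma\mapsto\sigma s$) does not conclude, because a sum of closed convex cones need not be closed; one genuinely needs a point — indeed a whole subinterval — of $K_S$ at which all generators are simultaneously positive, both to rule out cancellation among the summands and to extract the uniform bound above. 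Everything else, namely the degree bookkeeping in Step 1 and the norm-equivalence and extraction in Steps 2--3, is routine.
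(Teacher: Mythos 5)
Your proof is correct and is, in essence, the same approach as the scalar argument in Laurent's Theorem~3.49 that the paper invokes: use the nonempty interior of $K_S$ to get a uniform a~priori bound on the factoring data, then extract a convergent subsequence in the relevant finite-dimensional space. Your specifically matricial and univariate adaptations---the matrix Fej\'er--Riesz theorem to reduce to at most two factors of controlled degree, and passing to a whole subinterval $[a,b]\subset\operatorname{int}K_S$ where all generators are uniformly bounded below so that the sup-norm of each factor is controlled---are exactly the right substitutions, and the degree bookkeeping and degenerate-generator cleanup are handled correctly.
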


\begin{proof}
The proof is analogous to the proof of the scalar result \cite[Theorem 3.49]{Lau10}, i.e., $p=1$.
\end{proof}

\smallskip
\subsection{Matrix measures}

Let $K\subseteq \RR$ be a closed set 
and $\Bor(K)$
the Borel $\sigma$-algebra of $K$. 
We call 
	$$\mu=(\mu_{ij})_{i,j=1}^p:\Bor(K)\to \Sym_p$$ 
a $p\times p$ \textbf{Borel matrix-valued measure} supported on $K$ (or \textbf{positive $\Sym_p$-valued measure} for short)
if:
\begin{enumerate}
	\item 
		$\mu_{ij}:\Bor(K)\to \RR$ 
			is a real measure for every $i,j=1,\ldots,p$ and
            \smallskip
	\item
    $\mu(\Delta)\succeq \CB{0}$ for every $\Delta\in \Bor(K)$.
\end{enumerate}
\smallskip

A point $x\in K$ is called an atom of $\mu$ if $\mu(\{x\})\neq \CB{0}.$
A $\Sym_p$-valued measure $\mu$ is \textbf{finitely atomic}, if there exists a finite set $M\in \Bor(K)$
such that
$\mu(K\setminus M)=\CB{0}$
or
equivalently,
$\mu=\sum_{j=1}^k A_j\delta_{x_j}$
for some $k\in \NN$, $x_j\in K$, $A_j\in \Sym_p^{\succeq 0}$,
where $\delta_{x_j}$
stands for the Dirac measure \CB{supported on $x_j\in \RR$}.

We denote by 
$\cM_p(K,\Bor(K))$
the set of all $\Sym_p$-valued measures
and by
$\cM^{(\fa)}_p(K,\Bor(K))$
the set of all finitely atomic $\Sym_p$-valued measures.

Let $\mu\in \cM_p(K,\Bor(K))$
and 
$\tau:=\tr(\mu)=\sum_{i=1}^p \mu_{ii}$ denote its trace measure. 
A polynomial $f\in \RR[x]_{\leq n}$ is $\mu$-integrable if 
$f\in L^1(\tau)$. 
We define its integral by
$$
\int_K f\;d\mu
=
\Big(\int_K f\; d\mu_{ij}\Big)_{i,j=1}^p.
$$

\smallskip

\subsection{Truncated matrix-valued moment problem}

Let $n,p\in \NN$.
Given a linear mapping
\begin{equation}
\label{linear-operator}
    L:\RR[x]_{\leq n}\to \Sym_p,
\end{equation}
the \textbf{truncated matrix-valued moment problem} supported on $K$
asks to characterize the existence of a $\Sym_p$-valued measure $\mu\in \cM(K,\Bor(K))$ 
such that
	\begin{equation}
		\label{moment-measure-cond}
			L(f)=\int_{K} f\; d\mu\quad \text{for every}\quad f\in \RR[x]_{\leq n}.
	\end{equation}
If such a measure exists, we say that $L$ 
is \textbf{a $K$--matrix moment functional on $\RR[x]_{\leq n}$}
and 
$\mu$ is its $K$--\textbf{representing matrix-valued measure.} We denote by $\cM_L$ the set of all $K$--\CB{representing matrix--valued measures} for $L$.

Equivalently, one can define $L$ as in \eqref{linear-operator}
by a sequence of its values on monomials $x^i$, $i=0,\ldots,n$. 
Throughout the paper we will denote these values
by $\Gamma_i:=L(x^i)$. 
If
\begin{equation} 
\label{moment-sequence}
    \Gamma:=(\Gamma_0,\Gamma_1,\ldots,\Gamma_n)\in (\Sym_p)^{n+1}
\end{equation}
is given, then we denote the corresponding linear mapping on 
$\RR[\x]_{\leq n}$ by $L_{\Gamma}$ and call it a \textbf{Riesz 
mapping of $\Gamma$}. 
If $L_\Gamma$ is a \CB{$K$--matrix moment functional}, we call $\Gamma$ a 
\textbf{$K$--matrix moment sequence}.\\

The connection between the \CB{truncated matrix-valued moment problem on $K$} and $K$--positive matrix polynomials
is the following.

\begin{proposition}
    \label{prop:140525}
    Let $n,p\in \NN$, $\Gamma$ as in \eqref{moment-sequence}
    and $K$ a compact set. The following statements are equivalent:
    \begin{enumerate}
    \item 
    $\Gamma$ is a $K$--matrix moment sequence.
    \smallskip
    \item 
        $\sum_{i=0}^n A_ix^i\in \Pos_n^{(p)}(K)$
        implies that
        $\sum_{i=0}^n \tr(\Gamma_i A_i)\geq 0.$ 
    \end{enumerate}
\end{proposition}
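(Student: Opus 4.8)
The plan is to prove Proposition \ref{prop:140525} as a standard duality statement between moment sequences and positive polynomials, localized to the truncated setting.

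\textbf{Direction $(1)\Rightarrow(2)$.} This is the easy implication. Suppose $\Gamma$ is a $K$--mms, so there is a representing measure $\mu\in\cM_p(K,\Bor(K))$ with $\Gamma_i=\int_K x^i\,d\mu$. Take any $P(\x)=\sum_{i=0}^n A_i\x^i\in\Pos_n^{(p)}(K)$. Then
$$
\sum_{i=0}^n\tr(\Gamma_i A_i)=\sum_{i=0}^n\tr\!\Big(A_i\int_K x^i\,d\mu\Big)=\int_K\tr\!\Big(\sum_{i=0}^n x^i A_i\,d\mu\Big)=\int_K\langle P(x),d\mu\rangle\ge 0,
$$
where one uses linearity of the trace and of the integral to pull the sum inside, and the final inequality holds because the integrand is pointwise a Frobenius inner product of two psd matrices (the integrand $\tr(P(x)\,d\mu(x))$ is nonnegative since $P(x)\succeq0$ on $K$ and $\mu$ is a positive $\Sym_p$-valued measure). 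I would spell out the measure-theoretic justification that $x\mapsto\tr(P(x)\,d\mu/d\tau(x))$ is $\tau$-integrable and nonnegative $\tau$-a.e., using the Radon--Nikodym densities $d\mu_{ij}/d\tau$.

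\textbf{Direction $(2)\Rightarrow(1)$.} This is where the real content lies and where I expect the main obstacle. The idea is a Hahn--Banach/separation argument, using compactness of $K$ in an essential way. Identify the data with a linear functional $L_\Gamma$ on the finite-dimensional space $V:=\Sym_p(\RR[\x]_{\le n})$ via $L_\Gamma(\sum A_i\x^i)=\sum\tr(\Gamma_i A_i)$. Consider the cone
$$
\mathcal{C}:=\Big\{\Big(\int_K x^i\,d\mu\Big)_{i=0}^n:\ \mu\in\cM_p(K,\Bor(K))\Big\}\subseteq(\Sym_p)^{n+1},
$$
which is a convex cone; by Carathéodory's theorem for cones it equals the cone generated by the finitely atomic measures, i.e.\ by the "moment curve" points $(x^i v v^T)_{i=0}^n$ for $x\in K$, $v\in\RR^p$. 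The key step is to show $\mathcal{C}$ is \emph{closed}: here compactness of $K$ gives that the generating set $\{(x^i vv^T)_{i=0}^n: x\in K,\ \|v\|=1\}$ is compact, and one applies the standard lemma that the conical hull of a compact set not containing the origin (after quotienting by the zero sequence, if $0\in K$ one treats it separately or notes $vv^T\neq 0$) is closed. Once $\mathcal{C}$ is closed and convex, if $\Gamma\notin\mathcal{C}$ then by Hahn--Banach separation there is $P=\sum A_i\x^i\in V$ with $\sum\tr(\Gamma_i A_i)<0\le\sum\tr((\int x^i\,d\mu)A_i)$ for all $\mu\in\cM_p(K,\Bor(K))$; testing $\mu=\delta_x\otimes vv^T$ gives $\langle P(x),vv^T\rangle=v^TP(x)v\ge 0$ for all $x\in K$, $v\in\RR^p$, so $P\in\Pos_n^{(p)}(K)$, yet $L_\Gamma(P)<0$, contradicting (2). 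Hence $\Gamma\in\mathcal{C}$, i.e.\ $\Gamma$ is a $K$--mms.

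\textbf{Main obstacle and remarks.} The delicate point is the closedness of $\mathcal{C}$ (equivalently, the fact that a $K$--mms automatically admits a \emph{finitely atomic} representing measure). For compact $K$ this is clean: the moment-curve set is compact, its positive hull in a finite-dimensional space is closed, and Carathéodory bounds the number of atoms by $\dim V+1$. If $0\in K$ one should check that the zero sequence causes no trouble — it does not, since every $vv^T$ with $v\neq0$ is a nonzero psd matrix, so the generating set misses the origin of $(\Sym_p)^{n+1}$ provided we normalize $\|v\|=1$. I would also remark (or cite \cite[Corollary 5.2]{MS23++}) that this is exactly the matrix analogue of the scalar Riesz--Haviland-type truncated duality, and that the finite-dimensionality of $\Sym_p(\RR[\x]_{\le n})$ is what makes the separation argument elementary; no extra regularity of $\mu$ is needed beyond being a positive $\Sym_p$-valued Borel measure. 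The compactness hypothesis on $K$ is used only in the closedness step and cannot be dropped in general (which is consistent with the paper's later need to work on compact sets $\phi(K)$).
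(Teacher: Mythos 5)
Your proof is correct and reconstructs exactly the argument behind the paper's one-line citation to \cite[Lemma 2.3(a)]{DS02}: the easy integration direction plus Hahn--Banach separation against the moment cone, with compactness of $K$ used to make that cone closed. One small repair is warranted in direction $(2)\Rightarrow(1)$. The claim that $\mathcal{C}$ (moment sequences of \emph{all} Borel matrix measures) ``equals the cone generated by the moment-curve points by Carath\'eodory's theorem'' is a misattribution: Carath\'eodory only reduces the number of generators for points already known to lie in $D:=\mathrm{cone}\{(x^i vv^T)_{i=0}^n : x\in K,\ \|v\|=1\}$; showing $\mathcal{C}\subseteq D$ is a Tchakaloff-type statement that needs a separate argument (e.g.\ weak approximation of $\mu$ by finitely atomic measures, using boundedness of $x^i$ on compact $K$, plus closedness of $D$). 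However, this step is not actually needed: you can run the separation argument against $D$ directly. Indeed $D$ is closed by your compact-generating-set lemma, $D$ is trivially contained in the set of $K$--mms, and your separation step already only tests against $\delta_x\otimes vv^T\in D$. So if $\Gamma\notin D$ you obtain $P\in\Pos_n^{(p)}(K)$ with $L_\Gamma(P)<0$, contradicting (2), whence $\Gamma\in D$ and in particular $\Gamma$ is a $K$--mms (with a finitely atomic representing measure). With this small rerouting the proof is complete and matches the paper's approach.
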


\begin{proof}
The proof is verbatim the same to the proof of \cite[Lemma 2.3(a)]{DS02} which deals with the case $K=[0,1]$.
Here we emphasize $K$ needs to be \textit{compact} for the proof to work, since the
set 
$$\cM(K,n,p):=\{\Gamma=(\Gamma_0,\ldots,\Gamma_n)\in(\Sym_p)^{n+1}\colon \Gamma\text{ is a }K\text{--matrix moment sequence}\}$$
needs to be closed. 
\end{proof}

\begin{remark}
\CB{In \cite[Section~2]{DS02} and \cite[Section~5]{DS02}, the authors study the truncated matrix-valued moment problem on $[0,1]$ and $[0,\infty)$, respectively. The proofs of the results in \cite[Section~2]{DS02} are given in full detail, whereas in \cite[Section~5]{DS02} the authors merely state that the corresponding results remain valid; in particular, this is claimed for Proposition~\ref{prop:140525}. However, this assertion is not correct, even in the scalar case ($p=1$). \CB{For example, the sequence $\Gamma=(0,\ldots,0,\Gamma_n=1)$ is not a $[0,\infty)$--moment sequence, even though
$
\sum_{i=0}^n \Gamma_i a_i = a_n \ge 0
$
for every polynomial $\sum_{i=0}^n a_i x^i$ that is nonnegative on $[0,\infty)$.}

}
\end{remark}

\subsection{Moment matrix and localizing moment matrices}

For $m,n\in \NN$, $m\leq \frac{n}{2}$ and $\Gamma$ as in 
\eqref{moment-sequence} we denote by 
$$
	\cM_m\equiv
    \cM_m(\Gamma)
	=\begin{pmatrix}\Gamma_{i+j-2}\end{pmatrix}_{i,j=1}^{m+1}
	=\begin{pmatrix}
			\Gamma_{0} & \Gamma_1 & \Gamma_2 &\cdots & \Gamma_m\\
			\Gamma_1 & \Gamma_2 & \iddots & \iddots & \Gamma_{m+1} \\
			\Gamma_2 & \iddots & \iddots & \iddots & \vdots \\
                \vdots & \iddots & \iddots & \iddots & \Gamma_{2m-1} \\
			\Gamma_m & \Gamma_{m+1} & \cdots & \Gamma_{2m-1} & \Gamma_{2m}
		\end{pmatrix}
$$
the \textbf{$m$--th truncated moment matrix.}
For $0\leq i,j\leq \frac{n}{2}$ we also write
\begin{equation}
    \label{def:columns-mm}
    \textbf{v}_i^{(j)}
	=\begin{pmatrix}\Gamma_{i+r-1}\end{pmatrix}_{r=1}^{j+1}
	=\begin{pmatrix}
		\Gamma_{i} \\ \Gamma_{i+1} \\ \vdots \\ \Gamma_{i+j}
 	\end{pmatrix}
\end{equation}

For $f\in \RR[\x]_{\leq n}$ an \textbf{$f$--localizing moment matrix $\cH_{f}$ of $L:\RR[\x]_{\leq n}\to \Sym_p$} is a block square matrix of size 
$s(n,f)\times s(n,f)$, where 
    $s(n,f)=\lfloor \frac{n-\deg f}{2}\rfloor +1$,
with the $(i,j)$--th entry equal to $L(fx^{i+j-2})$.
Writing $\Gamma^{(f)}_i:=L(fx^i)$, the 
\textbf{$\ell$-th  truncated $f$--localizing matrix} is
\begin{equation*}
    \label{localized-truncated}
		\cH_{f}(\ell):=\left(\Gamma^{(f)}_{i+j-2} \right)_{i,j=1}^{\ell+1}
					=	\kbordermatrix{
							& \textit{1} & X & X^2 & \cdots  & X^{\ell} \\[0.2em]
							\textit{1} & \Gamma^{(f)}_0 & \Gamma^{(f)}_1 & \Gamma^{(f)}_2 & \cdots & \Gamma^{(f)}_\ell\\[0.2em]
							X & \Gamma^{(f)}_1 & \Gamma^{(f)}_2 & \iddots & \iddots & \Gamma^{(f)}_{\ell+1}\\[0.2em]
							X^2 & \Gamma^{(f)}_2 & \iddots & \iddots & \iddots & \vdots\\[0.2em]
							\vdots &\vdots 	& \iddots & \iddots & \iddots & \Gamma^{(f)}_{2\ell-1}\\[0.2em]
							X^\ell & \Gamma^{(f)}_\ell & \Gamma^{(f)}_{\ell+1} & \cdots & \Gamma^{(f)}_{2\ell-1} & \Gamma^{(f)}_{2\ell}
						}
	\end{equation*}


For $0\leq i,j\leq s(n,f)$ we also write
\begin{equation}
    \label{def:columns-loc-mm}
    (f\cdot \textbf{v})_i^{(j)}
	:=\begin{pmatrix}\Gamma^{(f)}_{i+r-1}\end{pmatrix}_{r=1}^{j+1}
	=\begin{pmatrix}
		\Gamma_{i}^{(f)} \\ \Gamma^{(f)}_{i+1} \\ \vdots \\ \Gamma_{i+j}^{(f)}
 	\end{pmatrix}.
\end{equation}

\subsection{Generalized Schur complements}\label{SubS2.1}
Let $m,n\in \NN$ and 
	\begin{equation}\label{matrixM}
		\cM=\left( \begin{array}{cc} A & B \\ C & D \end{array}\right)\in M_{n+m}(\RR),
	\end{equation}
where $A\in \RR^{n\times n}$, $B\in \RR^{n\times m}$, $C\in \RR^{m\times n}$  and $D\in \RR^{m\times m}$.
The \textbf{generalized Schur complement} of $A$ (resp.\ $D$) in $\cM$ is defined by
	$$\cM/A=D-CA^\dagger B\quad(\text{resp.}\; \cM/D=A-BD^\dagger C),$$
where $A^\dagger $ (resp.\ $D^\dagger $) stands for the Moore-Penrose inverse of $A$ (resp.\ $D$)  \cite{Zha05}. 

\CB{For a matrix $M$ we denote by $\cC M$ its image and by $\ker M$ its kernel.}

A characterization of \CB{positive semidefinite} $2\times 2$ block matrices in terms of 
generalized Schur complements is the following.

\begin{theorem}[{\cite{Alb69}}] 
	\label{block-psd}   
	Let $n,m\in \NN$ and 
	$$
        \cM=
        \left( 
            \begin{array}{cc} 
                A & B \\ 
                B^T & C 
            \end{array}\right)
        \in M_{n+m}(\RR),$$
	where $A\in \Sym_n$, $B\in \RR^{n\times m}$,
        $C\in \Sym_m$.
	Then: 
	\begin{enumerate}
		\item The following conditions are equivalent:
			\begin{enumerate}
				\item\label{pt1-281021-2128} $\cM\succeq 0$.
				\item\label{pt2-281021-2128} $C\succeq 0$, $\cC B^T\subseteq\cC C$ and $\cM/C\succeq 0$.
				\item\label{pt3-281021-2128} $A\succeq 0$, $\cC B\subseteq\cC A $ and $\cM/A\succeq 0$.
			\end{enumerate}
            \smallskip
		\item\label{021121-1052} If $\cM\succeq 0$, then $\Rank \cM=\Rank A$ if and only if $\cM/A=0$.
            \smallskip
		\item\label{170622-1154} If $\cM\succeq 0$, then $\Rank \cM=\Rank C$ if and only if $\cM/C=0$.
	\end{enumerate}
\end{theorem}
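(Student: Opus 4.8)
The plan is to derive both the equivalences in (1) and the rank statements in (2)--(3) from a single congruence identity written with the Moore--Penrose inverse, so the first step is to record that identity. I would set $S=\begin{pmatrix} I & A^\dagger B\\ 0 & I\end{pmatrix}$, an invertible matrix whose transpose is $S^T=\begin{pmatrix} I & 0\\ B^TA^\dagger & I\end{pmatrix}$ because $(A^\dagger)^T=A^\dagger$ for symmetric $A$. Assuming for the moment that $\cC(B)\subseteq\cC(A)$ --- equivalently $AA^\dagger B=B$, since for $A=A^T$ the matrix $AA^\dagger=A^\dagger A$ is the orthogonal projector onto $\cC(A)$ --- a direct block multiplication using also $A^\dagger AA^\dagger=A^\dagger$ would verify
$$
\cM=S^T\begin{pmatrix} A & 0\\ 0 & \cM/A\end{pmatrix}S ,
$$
where $\cM/A=C-B^TA^\dagger B$ is the generalized Schur complement of $A$ in $\cM$. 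Thus $\cM$ is congruent to $\diag(A,\cM/A)$, so, by Sylvester's law of inertia and the invariance of rank under congruence, under the standing assumption $\cC(B)\subseteq\cC(A)$ one has $\cM\succeq 0$ iff both $A\succeq 0$ and $\cM/A\succeq 0$, and $\Rank\cM=\Rank A+\Rank(\cM/A)$.

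With this in hand I would prove the equivalences in (1). The implication (c)$\Rightarrow$(a) is immediate from the displayed identity. For (a)$\Rightarrow$(c), the only step that is not purely formal --- and what I expect to be the main obstacle --- is showing that $\cM\succeq 0$ forces $\cC(B)\subseteq\cC(A)$. Here I would argue as follows: $A$ is a principal submatrix of $\cM$, hence psd; and for any $v\in\ker A$ and any $w$, evaluating the quadratic form of $\cM$ at $(tv,\,w)^T$ gives $2t\,v^TBw+w^TCw\ge 0$ for all $t\in\RR$, and a linear function of $t$ bounded below must be constant, so $v^TBw=0$ for all $w$, i.e.\ $B^Tv=0$. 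Hence $\ker A\subseteq\ker B^T$, and taking orthogonal complements (again using $A=A^T$) gives $\cC(B)=(\ker B^T)^\perp\subseteq(\ker A)^\perp=\cC(A)$. The congruence identity then applies and, reading $\diag(A,\cM/A)=(S^{-1})^T\cM S^{-1}\succeq 0$ off it, yields $\cM/A\succeq 0$. This establishes (a)$\Leftrightarrow$(c); the equivalence (a)$\Leftrightarrow$(b) I would obtain verbatim after conjugating $\cM$ by the block permutation interchanging its two diagonal blocks, which replaces $A$ by $C$ and $\cM/A$ by $\cM/C=A-BC^\dagger B^T$.

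Finally, for (2) I would note that if $\cM\succeq 0$ then (c) holds, so the congruence identity gives $\Rank\cM=\Rank A+\Rank(\cM/A)$; hence $\Rank\cM=\Rank A$ iff $\Rank(\cM/A)=0$, i.e.\ iff $\cM/A=0$. Part (3) comes out identically from the companion factorization through the block $C$, available by (b). Beyond the column-space containment flagged above, the whole argument is bookkeeping of the symmetric Moore--Penrose identities $A^\dagger AA^\dagger=A^\dagger$, $AA^\dagger=A^\dagger A$, and $(A^\dagger)^T=A^\dagger$, and the one place I would double-check carefully is that these are invoked correctly when expanding the block product $S^T\diag(A,\cM/A)S$.
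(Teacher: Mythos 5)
Your proof is correct. Note, though, that the paper does not prove Theorem \ref{block-psd} at all --- it is quoted as a known result and attributed to Albert's 1969 paper \cite{Alb69} (see also Zhang \cite{Zha05}), so there is no in-paper proof to compare against. Your argument is the standard one: the congruence $\cM=S^T\diag(A,\cM/A)\,S$ under the range condition $\cC(B)\subseteq\cC(A)$, combined with the observation that $\cM\succeq 0$ forces $\ker A\subseteq\ker B^T$ by looking at the quadratic form along $(tv,w)^T$ for $v\in\ker A$, followed by Sylvester's inertia/rank invariance under congruence. All the Moore--Penrose identities you invoke ($A^\dagger A A^\dagger=A^\dagger$, $AA^\dagger=A^\dagger A$ and $(A^\dagger)^T=A^\dagger$ for $A=A^T$) are used correctly, the block product $S^T\diag(A,\cM/A)S$ does expand back to $\cM$ when $AA^\dagger B=B$, and parts (2)--(3) follow cleanly from $\Rank\cM=\Rank A+\Rank(\cM/A)$ together with $\cM/A\succeq 0$. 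This matches the classical treatment and is a perfectly good self-contained proof of the cited result.
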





\section{The $K$--flat extension theorem for a truncated univariate sequence}
\label{sec:K-flat-extension}

In this section we extend the abstract solution to the \CB{truncated matrix-valued moment problem} for a semialgebraic set $K$ from the scalar \cite[Theorem 5.2]{CF00} (see also \cite[Theorem 1.6]{Lau05}) to the matrix case.

\begin{theorem}
	\label{main-theorem-060525}
	Let $n,p\in\NN$, $K$ 
        be a closed nonempty semialgebraic set such that $K=K_{S}$, where
        $S=\{g_1,\ldots,g_k\}\subset \RR[\x]$,
	and 
	$$
        \Gamma\equiv \Gamma^{(2n)}
        =(\Gamma_0,\Gamma_1,\ldots,\Gamma_{2n})\in (\Sym_p)^{2n+1}
        $$
        be a given sequence. 
    Let $v_j:=\lceil\frac{\deg g_j}{2}\rceil$ and $v:=\max(\max_j v_j,1)$.
	Then the following statements are equivalent:
	\begin{enumerate}
	\item\label{main-theorem-060525-pt1}
            There exists a $(\Rank \cM_{n-v})$--atomic $K$--representing matrix measure $\mu$ for $\Gamma$.
        \smallskip
	\item\label{main-theorem-060525-pt2}
            The following statemets hold:
            \smallskip
            \begin{enumerate}
                \item\label{main-theorem-060525-pt2-ass1} 
                    $\cM_n\succeq 0$.
                \smallskip
                \item\label{main-theorem-060525-pt2-ass2} 
                    $\cH_{g_j}(n-v_j)\succeq 0$ for $j=1,\ldots,k$.
                \smallskip
                \item\label{main-theorem-060525-pt2-ass3} 
                    $\Rank \cM_{n-v}=\Rank \cM_{n}.$
            \end{enumerate}
	\end{enumerate}
    
    Moreover, $\mu$
    has $\Rank \cM_{n-v}-\Rank \cH_{g_j}(n-v)$
    atoms $x\in \RR$ that satisfy $g_j(x)=0$.
\end{theorem}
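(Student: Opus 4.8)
My plan is to carry the scalar flat extension theorem of Curto--Fialkow, \cite[Theorem 5.2]{CF00} (see also \cite[Theorem 1.6]{Lau05}), over to the block setting almost verbatim, replacing scalar moments by the $\Sym_p$-blocks $\Gamma_i$, the scalar Hankel and localizing matrices by $\cM_m$ and $\cH_{g_j}(\ell)$, and the scalar Schur-complement calculus by Theorem \ref{block-psd}. The implication $\eqref{main-theorem-060525-pt1}\Rightarrow\eqref{main-theorem-060525-pt2}$ is routine: if $\mu=\sum_i A_i\delta_{x_i}$ is a $K$--rmm for $\Gamma$ with $\sum_i\Rank A_i=\Rank\cM_{n-v}=:r$, let $V_m$ be the block Vandermonde whose $i$-th block row is $\begin{pmatrix}I_p& x_iI_p&\cdots&x_i^mI_p\end{pmatrix}$; then $\cM_m=V_m\,\diag(A_i)\,V_m^T\succeq0$ and $\cH_{g_j}(\ell)=V_\ell\,\diag\bigl(g_j(x_i)A_i\bigr)\,V_\ell^T\succeq0$ since $A_i\succeq0$ and $g_j\ge0$ on $K$, which gives \eqref{main-theorem-060525-pt2-ass1} and \eqref{main-theorem-060525-pt2-ass2}, while $r=\sum_i\Rank A_i\ge\Rank\cM_n\ge\Rank\cM_{n-v}=r$ gives \eqref{main-theorem-060525-pt2-ass3}.

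For $\eqref{main-theorem-060525-pt2}\Rightarrow\eqref{main-theorem-060525-pt1}$ I would first build an infinite flat extension. Because $v\ge1$, the chain $\cM_{n-v}\preceq\cdots\preceq\cM_n$ of psd matrices together with \eqref{main-theorem-060525-pt2-ass3} forces $\Rank\cM_{n-1}=\Rank\cM_n$, so by Theorem \ref{block-psd}\eqref{021121-1052} $\cM_n/\cM_{n-1}=0$; thus $\cM_n$ is a flat extension of $\cM_{n-1}$ and there is a block matrix $W=(W_0,W_1,\ldots)$, $W_i\in M_p(\RR)$, realizing a column relation of $\cM_{n-v}$ (the stronger rank hypothesis \eqref{main-theorem-060525-pt2-ass3} makes the relation ``short'', i.e.\ supported on the columns indexed by $1,\x,\ldots,\x^{\,n-v}$). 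As in the univariate scalar case this relation is self-propagating under the shift $\textbf{v}_i^{(\cdot)}\mapsto\textbf{v}_{i+1}^{(\cdot)}$, so setting $\Gamma_{2n+k}:=\sum_i\Gamma_{i+n+k}W_i$ for $k\ge1$ recursively (the first step uses only $\Gamma_0,\ldots,\Gamma_{2n}$) yields an infinite sequence $(\Gamma_i)_{i\ge0}$ whose moment matrix $\cM_\infty$ is psd, is a flat extension of $\cM_{n-v}$, and has $\Rank\cM_\infty=r$. The only new feature relative to \cite{CF00} is the non-commutativity of the $p\times p$ blocks, which is harmless because the identities involved use only $\Gamma_i^T=\Gamma_i$ and the symmetry of Schur complements, so the extended blocks remain symmetric and the Hankel structure is preserved.

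Next I would extract the measure from $\cM_\infty$. Choosing $m$ with $\Rank\cM_m=\Rank\cM_{m+1}=r$, multiplication by $\x$ descends to a well-defined operator $M_\x$ on the $r$-dimensional quotient $\mathcal R$ of $\cC(\cM_\infty)$ by its kernel, equipped with the now positive definite inner product induced by $\cM_\infty$; $M_\x$ is self-adjoint since $\langle M_\x u,v\rangle=\langle u,M_\x v\rangle$ follows from $\Gamma_{(i+1)+j}=\Gamma_{i+(j+1)}$ and $\Gamma_i^T=\Gamma_i$. Hence $M_\x$ has real eigenvalues $x_1<\cdots<x_t$ with orthogonal spectral projections $P_1,\ldots,P_t$, and defining $A_i\in\Sym_p^{\succeq0}$ by $\langle A_i\xi,\eta\rangle:=\langle P_i[1\otimes\xi],[1\otimes\eta]\rangle$ and using $M_\x^{\,k}[1\otimes\xi]=[\x^k\otimes\xi]$ one checks (as in the scalar proof) that $\mu:=\sum_iA_i\delta_{x_i}$ represents $\Gamma$ with $\sum_i\Rank A_i=\dim\mathcal R=r$, so $\mu$ is $r$-atomic in the required sense. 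To force $\supp\mu\subseteq K=K_S$ I would propagate the relation through the localizing data: since the relation is short (here $v\ge v_j$ is exactly what makes it fit inside $\cH_{g_j}(n-v_j)$), $g_j$ times the relation annihilates the corresponding columns of $\cH_{g_j}(n-v_j)$, which with \eqref{main-theorem-060525-pt2-ass2} propagates to $\cH_{g_j}(m')\succeq0$ for all $m'$; evaluating on eigenvectors of $M_\x$ gives $g_j(x_i)\langle A_i\xi,\xi\rangle\ge0$, so $g_j\ge0$ at every atom and $\supp\mu\subseteq K$. For the ``Moreover'' part, $\cH_{g_j}(n-v)=V_{n-v}\,\diag\bigl(g_j(x_i)A_i\bigr)\,V_{n-v}^T$ together with flatness of the localizing matrix at level $n-v$ yields $\Rank\cH_{g_j}(n-v)=\sum_{i:\,g_j(x_i)\ne0}\Rank A_i=r-\sum_{i:\,g_j(x_i)=0}\Rank A_i$, so the number of atoms of $\mu$ (counted with $\Rank A_i$) on $\{g_j=0\}$ equals $\Rank\cM_{n-v}-\Rank\cH_{g_j}(n-v)$.

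I expect the difficulty to concentrate in two places. The first is the flat-extension propagation with non-commuting matrix coefficients: one must check that the extended blocks $\Gamma_i$ are well defined, symmetric, and mutually consistent (the relation read off from rows must agree with the one read off from columns), which is precisely where the symmetry clauses of Theorem \ref{block-psd} are used. The second, and the genuinely new bookkeeping compared with \cite{CF00}, is the matching of truncation levels --- verifying that the column relation furnished by \eqref{main-theorem-060525-pt2-ass3} is short enough to live inside every localizing matrix $\cH_{g_j}(n-v_j)$ and inside $\cH_{g_j}(n-v)$. This is exactly why the normalization $v=\max(\max_j v_j,1)$ is the right one, and it is the crux of both the support containment and the atom-counting statement.
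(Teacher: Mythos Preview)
Your proposal is correct, but the paper takes a considerably shorter route. Rather than rebuilding the infinite flat extension and the GNS-type construction of the multiplication operator $M_\x$ from scratch, the paper simply invokes \cite[Theorem~2.7.9]{BW11}: conditions \eqref{main-theorem-060525-pt2-ass1} and \eqref{main-theorem-060525-pt2-ass3} (together with $v\ge1$) are exactly the hypotheses of the flat matricial Hamburger theorem there, which delivers directly a factorization $\Gamma_i=CD^iC^T$ for $i=0,\ldots,2n$ with $C\in\RR^{p\times r}$ and $D=\diag(d_1,\ldots,d_r)$, $r=\Rank\cM_{n-v}$. The support verification is then a two-line computation rather than a propagation argument: setting $V_\ell=\begin{pmatrix}C^T&DC^T&\cdots&D^\ell C^T\end{pmatrix}$ one has $\cH_{g_j}(n-v_j)=V_{n-v_j}^T\,g_j(D)\,V_{n-v_j}$, and since flatness gives $\Rank V_{n-v_j}=r$, positive semidefiniteness of the left-hand side forces $g_j(d_i)\ge0$ for every $i$. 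The ``moreover'' count is the same identity read at level $n-v$. What your approach buys is self-containment (no black-box citation) and an explicit explanation of why the short column relation must fit inside each $\cH_{g_j}(n-v_j)$; what the paper's approach buys is economy and transparency, since the Vandermonde-type identity $\cH_{g_j}=V^Tg_j(D)V$ makes both the support containment and the atom count immediate without ever touching the localizing matrices beyond their given truncation level.
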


\begin{proof}
The nontrivial implication of the theorem is
    $
    \eqref{main-theorem-060525-pt2}
    \Rightarrow
    \eqref{main-theorem-060525-pt1}.
    $
Let $r:=\Rank \cM_{n-v}$. 
By \cite[Theorem 2.7.9]{BW11}, there is $C\in \RR^{p\times r}$ 
and a $r\times r$ diagonal matrix $D=\diag(d_1,\ldots,d_r)$ such that $\Gamma_i=CD^iC^T$
for $i=0,\ldots,2n$.  
It remains to show that $d_1,\ldots,d_r\in K$.
Let 
$$V_l=\begin{pmatrix}
    C^T &
    DC^{T} &
    \cdots &
    D^{l}C^{T}
\end{pmatrix} \quad \text{for} \quad l=0,\ldots,n+1.$$
Note that
\begin{equation}
\label{factorization}
    \cM_{l}
    =
    V_l^T V_l
\quad \text{for} \quad l=0,\ldots,n.
\end{equation}
By assumption \eqref{main-theorem-060525-pt2-ass3},
it follows that $\Rank V_{l}=r$ for $l=n-v,\ldots,n$.
Further, for each $g_j$, $j=1,\ldots,k$,
we have that
\begin{align}
\label{eq:loc-mat-0-0605}
\begin{split}
0\preceq \cH_{g_j}(n-v_j)
    &=V_{n-v_j}^Tg_j(D)V_{n-v_j}
    =V_{n-v_j}^T\diag(g_j(d_i))_{i=1}^r V_{n-v_j}.
\end{split}
\end{align}
Since $\Rank V_{n-v_j}=r$, it follows from 
\eqref{eq:loc-mat-0-0605} that 
for each $j$ and each $i$,
    $g_j(d_i)\geq 0$.
This proves that $d_j\in K$ for each $j$. Denoting the $i$--th column of $C$ by $\mbf{c}_i$,
we have that the $K$--\CB{representing matrix-valued measure} for $\Gamma$ is equal to
$\sum_{i=1}^{r} \mbf{c}_i\mbf{c}_i^T\delta_{d_i}$.
This concludes the proof of the implication
    $
    \eqref{main-theorem-060525-pt2}
    \Rightarrow
    \eqref{main-theorem-060525-pt1}.
    $

Let us prove the moreover part. 
Replacing $v_j$ with $v$ in \eqref{eq:loc-mat-0-0605} we see that
\begin{align*}
\Rank \cH_{g_j}(n-v)
&=
r-|\{d_i\colon g_j(d_i)=0\text{ and }i\in \{1,\ldots,r\}\}|\\
&=
\Rank \cM_{n-v}-|\{d_i\colon g_j(d_i)=0\text{ and }i\in \{1,\ldots,r\}\}|,
\end{align*}
which implies the moreover part.
\end{proof}

\begin{remark}
\begin{enumerate}
\item
Note that in Theorem \ref{main-theorem-060525},
$v>\max_j v_j$ if and only if $v_j=0$ for every $j$. But then each $g_j$ is a nonnegative constant (since $K\neq \emptyset$) and $K=\RR$.
Then Theorem \ref{main-theorem-060525} is \cite[Theorem 2.7.9]{BW11}.
\smallskip
\item 
A $d$--variate version of Theorem \ref{main-theorem-060525}
for $K=\RR^d$ and $S=\{1\}$ is \cite[Proposition 4.3]{MS16} (see also \cite[Theorem 6.2]{KT22} and
\cite[Theorem 4.3]{MS23++}).
\end{enumerate}
\end{remark}

\section{Coflatness implies flatness}
\label{sec:coflatness}

Let $n\in \NN$, $L:\RR[\x]_{\leq n}\to \Sym_p$ be a 
linear mapping and $f\in \RR[\x]_{\leq n}$.
We call:
\begin{enumerate} 
\item A moment matrix $\cM_{\lfloor \frac{n}{2}\rfloor}$ of $L$
 \textbf{coflat}, if 
$\Rank \cM_{\lfloor \frac{n}{2}\rfloor}=\Rank \cH_{\x^2}(\lfloor \frac{n}{2}\rfloor-1)$. 
\smallskip
\item An $f$--localizing moment matrix $\cH_f$ \textbf{coflat}, if
$\Rank \cH_{f}=\Rank \cH_{\x^2f}.$
\end{enumerate}

The main result of this section, Proposition \ref{lem:aux-3}, states that for a compact set $K\subseteq \RR$ with $0=\min K$, under the positivity assumptions, the coflatness of the moment matrix (resp.\ the $(\max K-\x)$--localizing matrix) implies its flatness. This will be essentially used in the solution to the \CB{truncated matrix-valued moment problem} for a union of an interval and a point, since the coflatness can be easily achieved by manipulating $L(1)$. 

\begin{proposition}
    \label{lem:aux-3}
	Let \CB{$K\subset \RR$ be a finite union of closed intervals and points}
    with $\min K=0$ and 
        $\max K=c$, $c>0$.
        Let 
	$n\in \NN$, $p\in \NN$
	and
		$\Gamma=(\Gamma_0,\ldots,\Gamma_{n})\in (\Sym_p)^{n+1}$
	be a sequence with the Riesz mapping $L_\Gamma$. Assume that 
        $\cM_m$ is positive semidefinite.
    If:
    \smallskip
    \begin{enumerate}
        \item\label{lem:aux-3-pt1} $n=2m$,
            $\cH_{\x(c-\x)}(m-1)$ is positive semidefinite
            and 
            $\cM_m$ is coflat, then
	       \begin{equation}
                \label{lem:aux-3-pt1-eq}
	            \Rank \cM_{m}=\Rank \cM_{m-1}.
               \end{equation}
        \item\label{lem:aux-3-pt2} $n=2m+1$,
            $\cH_{\x}(m)$ and 
            $\cH_{c-\x}(m)$ are positive semidefinite,
            and 
            $\cH_{c-\x}$ is coflat,
            then
        \begin{equation}
                \label{lem:aux-3-pt2-eq}
	               \Rank \cH_{c-\x}(m)=\Rank \cH_{c-\x}(m-1).
          \end{equation}
    \end{enumerate}
\end{proposition}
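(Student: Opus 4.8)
The plan is to treat both cases at once. Set $\cN:=\cM_m$ in case~\ref{lem:aux-3-pt1} and $\cN:=\cH_{c-\x}(m)$ in case~\ref{lem:aux-3-pt2}; the latter is the block Hankel matrix built from the shifted sequence $\Gamma^{(c-\x)}_i=c\Gamma_i-\Gamma_{i+1}$, and is positive semidefinite by hypothesis. Writing $\cN=(\Delta_{i+j-2})_{i,j=1}^{m+1}$ and $\hat{\mathbf q}$ for the coefficient vector of a vector polynomial $\mathbf q$, I would first use the sum-of-squares identities $\x=\tfrac1c\x^2+\tfrac1c\x(c-\x)$, $c-\x=\tfrac1c\x(c-\x)+\tfrac1c(c-\x)^2$ and $\x(c-\x)=\tfrac1c\x^2(c-\x)+\tfrac1c\x(c-\x)^2$, together with the elementary fact that $\cH_{g^2h}(\ell)\succeq0$ whenever $\cH_{h}\succeq0$ and the degrees fit (via the identity $\hat{\mathbf q}^{T}\cH_{g^2h}(\ell)\hat{\mathbf q}=\widehat{(g\mathbf q)}^{T}\cH_{h}\,\widehat{(g\mathbf q)}$), to deduce that in both cases the $\x$-- and $(c-\x)$--localizing matrices of $\cN$, namely $(\Delta_{i+j-1})_{i,j=1}^{m}$ and $(c\Delta_{i+j-2}-\Delta_{i+j-1})_{i,j=1}^{m}$, are positive semidefinite. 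Since coflatness of $\cN$ is precisely $\Rank\cN=\Rank(\Delta_{i+j})_{i,j=1}^{m}$ and the desired conclusion \eqref{lem:aux-3-pt1-eq} (resp.\ \eqref{lem:aux-3-pt2-eq}) is precisely $\Rank\cN=\Rank(\Delta_{i+j-2})_{i,j=1}^{m}$, it then suffices to prove: \emph{if $c>0$ and $\cN=(\Delta_{i+j-2})_{i,j=1}^{m+1}\succeq0$ has positive semidefinite $\x$-- and $(c-\x)$--localizing matrices, then $\Rank\cN=\Rank(\Delta_{i+j})_{i,j=1}^{m}$ forces $\Rank\cN=\Rank(\Delta_{i+j-2})_{i,j=1}^{m}$}.

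Next I would reformulate this via column spaces. Splitting $\cN$ as a $2\times2$ block matrix isolating the first, resp.\ the last, block row and column and applying Theorem~\ref{block-psd} (the statements relating rank to vanishing of a generalized Schur complement and to containment of column spaces), the hypothesis becomes ``the block column of $\cN$ indexed by $\x^{0}$ lies in the span of the block columns indexed by $\x^{1},\dots,\x^{m}$'' and the conclusion becomes ``the block column indexed by $\x^{m}$ lies in the span of those indexed by $\x^{0},\dots,\x^{m-1}$''; equivalently, writing $N=\Ker\cN\subseteq\RR^{p}[\x]_{\le m}$, the hypothesis reads $N+\x\,\RR^{p}[\x]_{\le m-1}=\RR^{p}[\x]_{\le m}$ and the conclusion reads $N+\RR^{p}[\x]_{\le m-1}=\RR^{p}[\x]_{\le m}$.

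I would then prove the claim by induction on $m$. For $m=1$ one has $\cN=\left(\begin{smallmatrix}\Delta_0&\Delta_1\\\Delta_1&\Delta_2\end{smallmatrix}\right)\succeq0$, $\Delta_2\succeq0$ and $c\Delta_1-\Delta_2\succeq0$, the hypothesis is $\cN/\Delta_2=0$, i.e.\ $\Delta_0=\Delta_1\Delta_2^{\dagger}\Delta_1$, and one must show $\cN/\Delta_0=0$, i.e.\ $\Delta_2=\Delta_1\Delta_0^{\dagger}\Delta_1$. From $\cN\succeq0$ one has $\cC(\Delta_1)\subseteq\cC(\Delta_0)\cap\cC(\Delta_2)$, from the hypothesis $\cC(\Delta_0)\subseteq\cC(\Delta_1)$, and from $c\Delta_1\succeq\Delta_2\succeq0$ (with $c>0$) any $v\in\Ker\Delta_1$ satisfies $0=cv^{T}\Delta_1v\ge v^{T}\Delta_2v\ge0$, hence $\Delta_2v=0$; so $\cC(\Delta_0)=\cC(\Delta_1)=\cC(\Delta_2)=:V$, on which $\Delta_1$ is invertible, and restricting the hypothesis to $V$ and inverting yields $\Delta_2=\Delta_1\Delta_0^{\dagger}\Delta_1$. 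For the inductive step I would pass to the once-shifted sequence $\Delta'_i:=\Delta_{i+1}$, whose block Hankel matrix $\cN'$ is the $\x$--localizing matrix of $\cN$, hence positive semidefinite, and whose own $\x$-- and $(c-\x)$--localizing matrices are positive semidefinite by the same sum-of-squares identities as in the reduction step; assuming $\cN'$ is coflat, the induction hypothesis gives $\Rank\cN'=\Rank(\Delta_{i+j-2})_{i,j=1}^{m-1}$, and a final rank count using $\cN\succeq0$, coflatness of $\cN$ and the column-span containments from Theorem~\ref{block-psd} upgrades this to $\Rank\cN=\Rank(\Delta_{i+j-2})_{i,j=1}^{m}$.

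The main obstacle is the missing ingredient of the inductive step: showing that the shift $\cN'$ inherits coflatness from $\cN$ — equivalently, ``dividing the degree-zero relation by $\x$''. Coflatness of $\cN$ yields, for each standard basis vector $\mathbf e_k$, a relation $\mathbf e_k\equiv\x\,\mathbf s_k(\x)\ (\mathrm{mod}\ N)$ with $\deg\mathbf s_k\le m-1$, but multiplying such a relation by $\x$ leaves the available degree range, so it cannot simply be shifted upward. This is exactly the point where boundedness of $K$ ($\max K=c<\infty$, i.e.\ positivity of the $(c-\x)$--localizing matrix) is indispensable — the scalar datum $\Delta_0=\Delta_1=0$, $\Delta_2>0$ is coflat and positive semidefinite but fails flatness, and is excluded only by $c\Delta_1-\Delta_2\succeq0$ — and it will have to be exploited, via a Cauchy--Schwarz / generalized Schur-complement argument in the spirit of the base case, to control the top-degree overflow and manufacture the new column relations. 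I expect this step to be the bulk of the proof.
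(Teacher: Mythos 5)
Your unification of the two cases via $\cN:=\cM_m$ (resp.\ $\cN:=\cH_{c-\x}(m)$ with $\Delta_i=c\Gamma_i-\Gamma_{i+1}$) is a genuinely nice idea that the paper does not exploit, but your abstract reduction statement is \emph{false as written}, and your own scalar example shows it. You assert it suffices that $\cN\succeq 0$ and that the $\x$-- and $(c-\x)$--localizing matrices $(\Delta_{i+j-1})_{i,j=1}^m$ and $(c\Delta_{i+j-2}-\Delta_{i+j-1})_{i,j=1}^m$ are psd. Take $p=1$, $m=1$, $\Delta_0=\Delta_1=0$, $\Delta_2=1$: then $\cN\succeq 0$, $\Delta_1=0\succeq 0$, $c\Delta_0-\Delta_1=0\succeq 0$, $\Rank\cN=1=\Rank\Delta_2$, but $\Rank\Delta_0=0$. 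The hypothesis that actually matters (and that you quietly smuggle into your base case as ``$c\Delta_1-\Delta_2\succeq 0$'') is positivity of the $\x(c-\x)$--localizing matrix $(c\Delta_{i+j-1}-\Delta_{i+j})_{i,j=1}^m$ of the sequence $\Delta$, not of its $(c-\x)$--localizing matrix. With that correction the unification does work: in case~\eqref{lem:aux-3-pt1} this is the hypothesis $\cH_{\x(c-\x)}(m-1)\succeq 0$ directly, and in case~\eqref{lem:aux-3-pt2} it is $\cH_{\x(c-\x)^2}(m-1)=B_m(c)\cH_{\x}(m)B_m(c)^T\succeq 0$ by Lemma~\ref{lem:aux-1}. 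So the correct single statement to prove is: \emph{if $\cN=(\Delta_{i+j-2})_{i,j=1}^{m+1}\succeq 0$, $(c\Delta_{i+j-1}-\Delta_{i+j})_{i,j=1}^m\succeq 0$ and $\Rank\cN=\Rank(\Delta_{i+j})_{i,j=1}^m$, then $\Rank\cN=\Rank(\Delta_{i+j-2})_{i,j=1}^m$.}

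The second, and more serious, gap is the one you flag yourself: your induction on $m$ has no proof of the inductive step, namely that the shifted matrix $\cN'=(\Delta_{i+j})_{i,j=1}^m$ inherits coflatness from $\cN$. Nothing in the hypotheses forces the degree-zero relation to ``divide by $\x$''; this is exactly where the argument would have to do real work, and you acknowledge it is unresolved. The paper avoids induction entirely. For case~\eqref{lem:aux-3-pt1} it establishes a \emph{direct cycle} of column-space inclusions: $\cC(\cH_\x(m-1))\subseteq\cC(\cM_{m-1})$ from $\cM_m\succeq 0$ and Theorem~\ref{block-psd}; $\cC(\cM_{m-1})\subseteq\cC(\cH_{\x^2}(m-1))$ from coflatness; and the closing inclusion $\cC(\cH_{\x^2}(m-1))\subseteq\cC(\cH_\x(m-1))$ from the identity $\cH_{\x}(m-1)=\frac1c\bigl(\cH_{\x(c-\x)}(m-1)+\cH_{\x^2}(m-1)\bigr)$ together with Lemma~\ref{110722-1428} (Loewner monotonicity of column space), since both summands are psd. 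Equality of all three spaces combined with coflatness gives \eqref{lem:aux-3-pt1-eq} at once, for all $m$. Case~\eqref{lem:aux-3-pt2} is handled by the paper with two rank inequalities built from the analogous psd decompositions $\cH_{\x(c-\x)}(m-1)=\frac1c\bigl(\cH_{\x(c-\x)^2}(m-1)+\cH_{\x^2(c-\x)}(m-1)\bigr)$ and $\cH_{c-\x}(m-1)=\frac1c\bigl(\cH_{(c-\x)^2}(m-1)+\cH_{\x(c-\x)}(m-1)\bigr)$; with your (corrected) unification this could instead be deduced from case~\eqref{lem:aux-3-pt1} applied to $\Delta$, which would be a modest streamlining of the paper's presentation. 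In short: the reduction is salvageable once you replace the $(c-\x)$--localizing hypothesis by the $\x(c-\x)$--localizing one, but the inductive strategy you outline is not carried out, and the uniform, non-inductive column-space argument of the paper is the natural way to finish.
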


\bigskip

In the proof we will need a few lemmas.

\begin{lemma}
    \label{lem:aux-1}
    Let  $\Gamma\equiv(\Gamma_0,\Gamma_1,\ldots,\Gamma_n)\in     
            \big(\Sym_p\big)^{n+1}$
    be a sequence, $f\in \RR[\x]_{\leq n}$ a polynomial
    and $m\in \ZZ_+$, $m\leq \frac{1}{2}(n-\deg{f})$.
    Then 
    \begin{equation}
    \label{040525-1854}
        B_m(t) \cH_f(m)\big(B_m(t)\big)^T
        =
        \cH_{(\x-t)^2f(\x)}(m-1),    
    \end{equation}
where
    \begin{equation*} 
		B_m(t):=
		\begin{pmatrix}
		-tI_p & I_p & 0  & \cdots & 0\\
		0 & -tI_p & I_p& \ddots & \vdots \\
		\vdots  & \ddots & \ddots &  \ddots & 0 \\
		0 & \cdots & 0 &  -tI_p & I_p
		\end{pmatrix}\in M_{m,m+1}(M_p)
   \end{equation*}
is a $m\times (m+1)$ block matrix with blocks of size $p\times p$.
\end{lemma}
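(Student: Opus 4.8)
The statement is a purely algebraic identity relating an $f$--localizing moment matrix $\cH_f(m)$, conjugated by the block ``shift-minus-$t$'' matrix $B_m(t)$, to the localizing moment matrix of $(\x-t)^2 f(\x)$. The plan is to verify it entrywise at the level of $p\times p$ blocks. Recall that the $(i,j)$--block of $\cH_f(m)$ is $\Gamma^{(f)}_{i+j-2} = L(f\x^{i+j-2})$ for $1\le i,j\le m+1$, and the $(i,j)$--block of $\cH_{(\x-t)^2f}(m-1)$ is $L\bigl((\x-t)^2 f\,\x^{i+j-2}\bigr)$ for $1\le i,j\le m$. The rows of $B_m(t)$ have exactly two nonzero blocks: in row $i$ (for $1\le i\le m$) the entries $-tI_p$ in column $i$ and $I_p$ in column $i+1$.

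First I would compute the $(i,j)$--block of the product $B_m(t)\,\cH_f(m)\,B_m(t)^T$ directly. Since row $i$ of $B_m(t)$ picks out columns $i$ and $i+1$, and similarly for $B_m(t)^T$ on the right picking out rows $j$ and $j+1$, the $(i,j)$--block is the $2\times 2$ combination
\[
(-t)(-t)\,\Gamma^{(f)}_{(i)+(j)-2} + (-t)(1)\,\Gamma^{(f)}_{(i)+(j+1)-2} + (1)(-t)\,\Gamma^{(f)}_{(i+1)+(j)-2} + (1)(1)\,\Gamma^{(f)}_{(i+1)+(j+1)-2},
\]
which equals $t^2\Gamma^{(f)}_{i+j-2} - 2t\,\Gamma^{(f)}_{i+j-1} + \Gamma^{(f)}_{i+j}$. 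Using linearity of $L$ and $\Gamma^{(f)}_k = L(f\x^k)$, this is $L\bigl((t^2 - 2t\x + \x^2)f\,\x^{i+j-2}\bigr) = L\bigl((\x-t)^2 f\,\x^{i+j-2}\bigr)$, which is precisely the $(i,j)$--block of $\cH_{(\x-t)^2 f}(m-1)$. One should also check the size bookkeeping: $B_m(t)$ is $m\times(m+1)$ in blocks, $\cH_f(m)$ is $(m+1)\times(m+1)$, so the product is $m\times m$ in blocks, matching $\cH_{(\x-t)^2 f}(m-1)$; and the degree condition $m\le \tfrac12(n-\deg f)$ ensures all the moments $\Gamma^{(f)}_k$ appearing (indices up to $2m + \deg f \le n$) are defined, and likewise $m-1 \le \tfrac12(n - \deg((\x-t)^2 f))$.

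There is essentially no obstacle here — the identity is a direct consequence of the factorization of the localizing matrix in terms of a Vandermonde-like block structure, and the computation above is routine. (Indeed one may phrase it more slickly: if $\Gamma^{(f)}_k = W^{T} D^{k} W$ for suitable $W,D$ as in the factorization used in Theorem \ref{main-theorem-060525}, then $\cH_f(m) = \widetilde V_m^{T}\widetilde V_m$ with $\widetilde V_m = (W\ |\ DW\ |\ \cdots\ |\ D^m W)$, and $\widetilde V_m B_m(t)^T = (D-tI)\widetilde V_{m-1}$, so $B_m(t)\cH_f(m)B_m(t)^T = \widetilde V_{m-1}^{T}(D-tI)^{2}\widetilde V_{m-1} = \cH_{(\x-t)^2 f}(m-1)$; but since we do not want to presuppose such a factorization in general, the entrywise verification above is cleaner and fully general.) I would therefore present the entrywise computation as the proof.
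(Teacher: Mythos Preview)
Your proof is correct and takes essentially the same approach as the paper: a direct block-entrywise computation using the two nonzero blocks per row of $B_m(t)$ together with linearity of the Riesz map. The only cosmetic difference is that the paper splits the computation into two stages, first showing $(B_m(t)\cH_f(m))_{ij}=\Gamma^{((\x-t)f)}_{i+j-2}$ and then applying $B_m(t)^T$, whereas you compute the triple product in one shot; the content is identical.
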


\begin{proof}
First we will show that 
\begin{equation} 
    \label{040525-1853}
        (B_m(t)\cH_f(m))_{ij}=\Gamma^{((\x-t)f(\x))}_{i+j-2}
        \quad 
        \text{for}
        \quad 
        1\leq i\leq m,\;
        1\leq j\leq m+1,
    \end{equation}
where $(B_m(t)\cH_f(m))_{ij}$ stands for the matrix block in the $i$--th row and $j$--th column of $B_m(t)\cH_f(m)$.
By definition of $\Gamma^{((\mathbf{x}-t)f)}_{i}$, to establish \eqref{040525-1853}
we need to prove that
\begin{equation}
\label{eq:localizing}
        (B_m(t)\cH_f(m))_{ij}=L\big((\x-t)f(\x)\x^{i+j-2}\big)
        \quad 
        \text{for each}
        \quad 
        1\leq i\leq m,\;
        1\leq j\leq m+1,
\end{equation}
where $L$ is the Riesz functional of $\Gamma$.
Let $(f\cdot\mbf{v})_i^{(j)}$ be as in \eqref{def:columns-loc-mm}.
We have that:
\begin{align*}
    (B_m(t)\cH_f(m))_{ij}
    &=
        \Big(B_m(t)
        \begin{pmatrix} 
            (f\cdot\mathbf{v})_{0}^{(m)} &
            \cdots
            (f\cdot\mathbf{v})_{j-1}^{(m)} &
            \cdots &
            (f\cdot\mathbf{v})_{m}^{(m)} &
        \end{pmatrix}\Big)_{ij}\\
    &=
        \sum_{\ell=1}^{m+1} \big(B_m(t)\big)_{i\ell}\Gamma^{(f)}_{\ell+j-2}\\[0.2em]
    &=-t\Gamma_{i+j-2}^{(f)}+\Gamma_{i+j-1}^{(f)}
    \\[0.2em]
    &=
        L\big(-tf(\x)\x^{i+j-2}+f(\x)\x^{i+j-1}\big)\\[0.2em]
    &=
        L\Big((\x-t)f(\x)
        \x^{i+j-2}\Big),
\end{align*}
which is \eqref{eq:localizing}.

Finally, \eqref{040525-1854} follows by the following computation:
\begin{align*}
    B_m(t)\cH_f(m)\big(B_m(t)\big)^T
    &\underbrace{=}_{\eqref{eq:localizing}}(\Gamma^{((\x-t)f(\x))}_{i+j-2})_{  
        \substack{
            1\leq i\leq m,\\
            1\leq j\leq m+1
            }} \big(B_m(t)\big)^T\\[0.2em]
    &=(-t\Gamma^{((\x-t)f(\x))}_{i+j-2}+\Gamma^{((\x-t)f(\x))}_{i+j-1})_{  
        \substack{
            1\leq i\leq m,\\
            1\leq j\leq m
            }} \\[0.2em]
    &=(\Gamma^{(-t(\x-t)f(\x))}_{i+j-2}+\Gamma^{(\x(\x-t)f(\x))}_{i+j-2})_{  
        \substack{
            1\leq i\leq m,\\
            1\leq j\leq m
            }} \\[0.2em]
    &=(\Gamma^{((\x-t)^2f(\x))}_{i+j-2})_{  
        \substack{
            1\leq i\leq m,\\
            1\leq j\leq m
            }} \\[0.2em]
    &=\cH_{(\x-t)^2f(\x)}(m-1),
\end{align*}
which  concludes the proof of the lemma.
\end{proof}

The following lemma states that the rank of a matrix is a monotone function on the set of \CB{positive semidefinite} matrices with respect
to the usual Loewner order, i.e., $A\succeq B$ if and only if $A-B\succeq 0$.

\begin{lemma}\label{110722-1428}
	Let \CB{$p\in \NN$} and $A,B\in \CB{\Sym_p}$ such that $A\succeq B\succeq 0$. Then $\cC B\subseteq \cC A$ and $\Rank A\geq \Rank B.$
\end{lemma}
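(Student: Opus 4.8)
The plan is to reduce both conclusions to a single containment of kernels. Since $A$ and $B$ are symmetric, one has $\cC(A)=(\Ker A)^\perp$ and $\cC(B)=(\Ker B)^\perp$; hence the inclusion $\cC(B)\subseteq\cC(A)$ is equivalent to $\Ker A\subseteq\Ker B$, and the rank inequality will then follow at once from the rank–nullity theorem, since $\Ker A\subseteq\Ker B$ forces $\dim\Ker A\le\dim\Ker B$, i.e. $\Rank A=n-\dim\Ker A\ge n-\dim\Ker B=\Rank B$.

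So the only thing to prove is $\Ker A\subseteq\Ker B$. First note that $A\succeq B\succeq0$ implies $A\succeq0$. Now take any $v\in\RR^n$ with $Av=0$. Then $v^TAv=0$, and applying the hypothesis $A-B\succeq0$ to $v$ gives $0\le v^T(A-B)v=v^TAv-v^TBv=-v^TBv$, so $v^TBv\le0$. Combined with $B\succeq0$ this yields $v^TBv=0$. The one elementary fact I would invoke here is that for a positive semidefinite matrix $B$, $v^TBv=0$ implies $Bv=0$: writing $B=C^TC$ for some $C$ (for instance $C=B^{1/2}$, or any square-root factorization), $v^TBv=\lVert Cv\rVert^2$, so $v^TBv=0$ forces $Cv=0$ and hence $Bv=C^T(Cv)=0$. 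Thus $v\in\Ker B$, which is exactly $\Ker A\subseteq\Ker B$.

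Putting the pieces together, $\cC(B)=(\Ker B)^\perp\subseteq(\Ker A)^\perp=\cC(A)$, and $\Rank A\ge\Rank B$ as above. There is no real obstacle in this argument; the only points requiring (one-line) justification are the two standard facts used — that the column space of a symmetric matrix is the orthogonal complement of its kernel, and that $v^TBv=0\Rightarrow Bv=0$ for positive semidefinite $B$ — so the write-up will be short.
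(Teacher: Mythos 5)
Your proposal is correct and follows essentially the same argument as the paper: reduce both claims to $\Ker A\subseteq\Ker B$ via the orthogonal-complement identity $\cC(X)=(\Ker X)^\perp$ for symmetric $X$, then show $v\in\Ker A\Rightarrow v^TBv=0\Rightarrow B^{1/2}v=0\Rightarrow Bv=0$. The only (cosmetic) difference is that the paper writes the sandwich $0=v^TAv\ge v^TBv\ge 0$ directly rather than applying $A-B\succeq 0$ separately.
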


\begin{proof} 
	Since for every $X\in \CB{\Sym_p}$ it holds that $\cC X$ is an orthogonal complement of $\ker X$ 
	with respect to the usual Euclidean inner product, it suffices to prove that $\ker A\subseteq \ker B$.
	Let us take $v\in \ker A$. 
	From $0=v^TAv\geq v^TBv\geq 0$, it follows that $v^TBv=0$. 
	By
		$0=v^TBv=v^TB^{\frac{1}{2}}B^{\frac{1}{2}} v=\|B^{\frac{1}{2}} v\|^2,$
	it follows that $v\in \ker B^{\frac{1}{2}}$ and thus $v\in \ker B.$
\end{proof}
\bigskip

Now we are ready to prove 
Proposition \ref{lem:aux-3}.

\begin{proof}[Proof of Proposition \ref{lem:aux-3}]
First we prove \eqref{lem:aux-3-pt1}.
The rank equality \eqref{lem:aux-3-pt1-eq} will follow
once we establish the following two equalities:
\begin{align}\label{150722-1508}
    \cC\cM_{m-1}
        &= \cC\cH_{\x}(m-1)
        =\cC\cH_{\x^2}(m-1).
\end{align}

	From 
		$$
		0
		\preceq 
		\cM_{m}
		=
		\begin{pmatrix}
			 \cM_{m-1} & \mbf{v}^{(m-1)}_m\\[0.5em]
			(\mbf{v}^{(m-1)}_m)^T & \Gamma_{2m}
		\end{pmatrix}
            =
            \begin{pmatrix}
			 \mbf{v}_0^{(m-1)} & \cH_{\x}(m-1)\\[0.5em]
			 \Gamma_{m} & (\mbf{v}^{(m-1)}_{m+1})^T
		\end{pmatrix},
		$$ 
	it follows by Theorem \ref{block-psd}.\eqref{pt3-281021-2128} 
	used for the pair 
		$(\cM,A)=(\cM_m,\cM_{m-1})$,
	that 
	\begin{equation}\label{150722-1507}
		\cC\cM_{m-1}\supseteq \cC\cH_\x(m-1).
	\end{equation}
	Similary, from 
		$$
		0
		\preceq 
		\cM_{m}
		=
		\begin{pmatrix}
			\Gamma_0 &  (\mbf{v}_1^{(m-1)})^T\\[0.5em]
			\mbf{v}_1^{(m-1)}	& \cH_{\x^2}(m-1)
		\end{pmatrix},
		$$ 
	it follows by Theorem \ref{block-psd}.\eqref{pt2-281021-2128}
	used for the pair 
		$(\cM,C)=(\cM_m,\cH_{\x^2}(m-1))$,
	that 
	\begin{equation}\label{160722-2341}
	       \cC\mathbf v_1^{(m-1)}\subseteq \cC\cH_{\x^2}(m-1).
	\end{equation}
    The assumption $\Rank \cM_m=\Rank \cH_{\x^2}(m-1)$, \CB{in particular implies that
    \begin{equation} 
    \label{additional-inclusion}
    \cC \mathbf{v}_0^{(m-1)}\subseteq 
    \cC 
    \begin{pmatrix}
        \mathbf{v}_1^{(m-1)} & \ldots & \mathbf{v}_m^{(m-1)}
    \end{pmatrix}.
    \end{equation}
    By \eqref{160722-2341}, \eqref{additional-inclusion} and
    $$\cM_{m-1}=
    \begin{pmatrix}
        \mathbf{v}_0^{(m-1)} & 
        \ldots &
        \mathbf{v}_{m-1}^{(m-1)}
    \end{pmatrix},\quad
    \cH_{\x^2}(m-1)=
    \begin{pmatrix}
        \mathbf{v}_2^{(m-1)} & 
        \ldots &
        \mathbf{v}_{m+1}^{(m-1)}
    \end{pmatrix},
    $$} it follows that
    \begin{equation}
        \label{030525-1800}
            \cC\cM_{m-1}\subseteq \cC\cH_{\x^2}(m-1).
    \end{equation}
    Hence, by \eqref{150722-1507}--\eqref{030525-1800} we have that
    \begin{equation}
        \label{030525-1802}
                \cC\cH_{\x}(m-1)
            \subseteq 
                \cC\cM_{m-1}
            \subseteq 
                \cC\cH_{\x^2}(m-1).
    \end{equation} 
    Note that		
	\begin{equation}\label{270722-0930}
		\frac{1}{c}
            \big(
                \cH_{\x(c-\x)}(m-1)
                +
                \cH_{\x^2}(m-1)
            \big)=\cH_{\x}(m-1).
	\end{equation}	
	Since $\cH_{\x(c-\x)}(m-1)$ and $\cH_{\x^2}(m-1)$ are both \CB{positive semidefinite}, it follows from \eqref{270722-0930} that 
		$$0\preceq \cH_{\x^2}(m-1)\preceq \cH_{\x(c-\x)}(m-1)+\cH_{\x^2}(m-1).$$
	Therefore
	\begin{equation}\label{150722-1303}
		\cC\cH_{\x^2}(m-1)
			\subseteq 
				\cC\big(\cH_{\x(c-\x)}(m-1)+\cH_{\x^2}(m-1)\big)
			= 
				\cC\cH_{\x}(m-1),
	\end{equation}
	where the inclusion follows by Lemma \ref{110722-1428} used for 
		$A=\cH_{\x(c-\x)}(m-1)+\cH_{\x^2}(m-1)$ 
	and
		$B=\cH_{\x^2}(m-1)$,
	while the equality follows by \eqref{270722-0930}.
    Now \eqref{150722-1303}
    implies that all inclusions in 
    \eqref{030525-1802} are equalities,
    which proves \eqref{150722-1508}
    and concludes the proof of the part \eqref{lem:aux-3-pt1}.\\

    It remains to prove \eqref{lem:aux-3-pt2}.
    We have that 
	\begin{equation}\label{270722-2225}
		\Rank \cH_{c-\x}(m)=\Rank \cH_{\x^2(c-\x)}(m-1)
		\leq \Rank \cH_{\x(c-\x)}(m-1),
	\end{equation}
	where the equality follows by the assumption, while the inequality follows by the equality
    \begin{align*}
        \cH_{\mathbf x(c-\mathbf x)}(m-1)
		&=\frac{1}{c}
            \big( 
                \cH_{(c-\x)^2\x}(m-1)+\cH_{\x^2(c-\x)}(m-1)
            \big)\\
        &\underbrace{=}_{\eqref{040525-1854}}
                \frac{1}{c}
                \Big(
                    \underbrace{B_m(c)\cH_{\mathbf x}(m)\big(B_m(c)\big)^T}_{\succeq 0}
                    +
                    \underbrace{B_m(0)\cH_{c-\mathbf{x}}(m)\big(B_m(0)\big)^T}_{\succeq 0}
                \Big)
    \end{align*}
	and Lemma \ref{110722-1428}.
   Similarly, the inequality
    \begin{align*} 
        \cH_{c-\mathbf x}(m-1)
		&=\frac{1}{c}
            \big(
                \cH_{(c-\x)^2}(m-1)+\cH_{\x(c-\x)}(m-1)
            \big)\\
        &\underbrace{=}_{\eqref{040525-1854}}
                \frac{1}{c}
                    \Big(
                    \underbrace{B_m(c)\cM(m)\big(B_m(c)\big)^T}_{\succeq 0}
                    +
                    \underbrace{\cH_{\mathbf x(c-\mathbf{x})}(m-1)}_{\succeq 0}
                    \Big)
    \end{align*}
	and Lemma \ref{110722-1428}
    imply that 
    \begin{equation}
        \label{030525-2058}
            \Rank \cH_{\x(c-\x)}(m-1)
            \leq
            \Rank \cH_{c-\x}(m-1).
    \end{equation}
    The inequalities \eqref{270722-2225} and \eqref{030525-2058}
    imply that
    $$
    \Rank \cH_{c-\x}(m)\leq \Rank \cH_{c-\x}(m-1),
    $$
    which is only possible in the case of the equality, whence 
    \eqref{lem:aux-3-pt2-eq} holds and concludes the proof
    of the part
    \eqref{lem:aux-3-pt2}.
\end{proof}


\section{\CB{The truncated matrix-valued moment problem} on a union of an interval and a point}
\label{sec:union-interval-point}

In this section we solve the \CB{truncated matrix-valued moment problem on a union of an interval and a point}. Then we use this solution to prove Theorem \ref{thm:bounded-interval}.

\begin{theorem}
	\label{main-theorem-1}
	Let $n,p\in\NN$, $a,b,c\in \RR$, $a<b<c$,
        $$K=K_S=\{a\}\cup [b,c],$$
        where
        $S:=\{f_1,f_2,f_3\}$
        with
        $f_1(\x)=\x-a$, $f_2(\x)=(\x-a)(\x-b)$, $f_3(\x)=c-\x$,
	and 
	$$
        \Gamma\equiv \Gamma^{(n)}
        =(\Gamma_0,\Gamma_1,\ldots,\Gamma_n)\in (\Sym_p)^{n+1}
        $$
        be a given sequence. 
	Then the following statements are equivalent:
	\begin{enumerate}
	\item\label{main-theorem-1-pt1}
            There exists a $K$--representing matrix measure for $\Gamma$.
        \smallskip
	\item\label{main-theorem-1-pt2}
            There exists a finitely--atomic $K$--representing matrix measure for $\Gamma$.
        \smallskip
	\item\label{main-theorem-1-pt3}
            One of the following statements holds:
            \smallskip
	\begin{enumerate}
	\item\label{main-theorem-1-pt3a} $n=2m$ for some $m\in\NN$ and
		\begin{align}
            \label{loc-mat-1}
		\cM_m
            \succeq 0,
            \quad
            \cH_{f_2}(m-1)\succeq 0,
            \quad\text{and}\quad
            \cH_{f_1f_3}(m-1)&\succeq 0.
		\end{align}
	\item\label{main-theorem-1-pt3b} $n=2m+1$ for some $m\in \NN$ and
		\begin{align}
            \label{loc-mat-2}
		\cH_{f_1}(m)
            \succeq 0,
            \quad
            \cH_{f_3}(m)\succeq 0,
            \quad
            \cH_{f_1f_2}(m-1)&\succeq 0
            \quad\text{and}\quad
            \cH_{f_2f_3}(m-1)
        \succeq 0.
        \end{align}
	\end{enumerate}
	\end{enumerate}
    
    Moreover, if $n=2m$, then there is
    a $(\Rank \cM_{n})$--atomic $K$--representing measure for $\Gamma$,
    while if $n=2m+1$, there exists at most $(\Rank \cM_{n}+p)$--atomic  one.
\end{theorem}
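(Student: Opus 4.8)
The plan is to prove the chain of implications together with the atom counts. The implication $\eqref{main-theorem-1-pt2}\Rightarrow\eqref{main-theorem-1-pt1}$ is trivial, and $\eqref{main-theorem-1-pt1}\Rightarrow\eqref{main-theorem-1-pt3}$ is routine: if $\mu$ is a $K$--representing matrix measure, then for every $g$ among $f_1,f_2,f_3$ and the pairwise products $f_1f_2,f_2f_3,f_1f_3$ --- all nonnegative on $K$ --- and every block vector $\mathbf w$ one has $\langle\cH_g(\ell)\mathbf w,\mathbf w\rangle=\int_K g\cdot q_{\mathbf w}^Tq_{\mathbf w}\,d\mu\ge0$ whenever $2\ell+\deg g\le n$, where $q_{\mathbf w}(\x)=\sum_iw_i\x^i$; collecting these according to the parity of $n$ yields \eqref{loc-mat-1} resp.\ \eqref{loc-mat-2}. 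The substance is $\eqref{main-theorem-1-pt3}\Rightarrow\eqref{main-theorem-1-pt2}$. After the substitution $\x\mapsto\x+a$ we may assume $a=0$, so $\min K=0$, $\max K=c>0$, $f_1=\x$, $f_3=c-\x$, $f_2=\x(\x-b)$ and $0\in K$. The crucial bookkeeping remark is that replacing $\Gamma_0$ by a smaller matrix (L\"owner order) changes only the $(1,1)$--block of the moment matrices $\cM_\bullet$ and of $\cH_{f_3}(\bullet)$ (whose $(1,1)$--entry is $c\Gamma_0-\Gamma_1$), every other localizing matrix below carrying a factor $\x$ or $\x^2$ and hence involving only $\Gamma_i$ with $i\ge1$. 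Throughout I use Lemma \ref{lem:aux-1}, Theorem \ref{block-psd}, and the elementary identities $c\,\cH_{f_1}(\ell)=\cH_{f_1f_3}(\ell)+\cH_{\x^2}(\ell)$, $c\,\cH_{f_3}(\ell)=\cH_{f_1f_3}(\ell)+\cH_{(c-\x)^2}(\ell)$, $c\,\cM_\ell=\cH_{f_1}(\ell)+\cH_{f_3}(\ell)$, $c\,\cH_{f_2}(\ell)=\cH_{f_1f_2}(\ell)+\cH_{f_2f_3}(\ell)$ and $c\,\cH_{f_1f_3}(\ell)=\cH_{(c-\x)^2\x}(\ell)+\cH_{\x^2(c-\x)}(\ell)$, valid whenever all matrices are defined; note also $\cH_{(c-\x)^2}(\ell)=B_{\ell+1}(c)\cM_{\ell+1}B_{\ell+1}(c)^T$, $\cH_{\x^2(c-\x)}(\ell)=B_{\ell+1}(0)\cH_{f_3}(\ell+1)B_{\ell+1}(0)^T$, $\cH_{(c-\x)^2\x}(\ell)=B_{\ell+1}(c)\cH_{f_1}(\ell+1)B_{\ell+1}(c)^T$ by Lemma \ref{lem:aux-1}.

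\textbf{Case $n=2m$.} Since $\cH_{\x^2}(m-1)$ is a principal submatrix of $\cM_m$ and $\cH_{(c-\x)^2}(m-1)=B_m(c)\cM_mB_m(c)^T\succeq0$, the identities and \eqref{loc-mat-1} give $\cH_{f_1}(m-1),\cH_{f_3}(m-1)\succeq0$. Decrease $\Gamma_0$ to $\widetilde\Gamma_0:=(\mathbf v_1^{(m-1)})^T\cH_{\x^2}(m-1)^\dagger\mathbf v_1^{(m-1)}$ (notation \eqref{def:columns-mm}); by Theorem \ref{block-psd} applied to $\cM_m$ this is a decrease, so $\Delta:=\Gamma_0-\widetilde\Gamma_0\succeq0$, and for the modified sequence $\widetilde\Gamma$ the matrix $\widetilde\cM_m$ stays psd with $\widetilde\cM_m/\cH_{\x^2}(m-1)=0$, i.e.\ $\widetilde\cM_m$ is coflat. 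All hypotheses of part \eqref{lem:aux-3-pt1} of Proposition \ref{lem:aux-3} now hold for $\widetilde\Gamma$ (the remaining localizing matrices are either unchanged, or psd by the same identities with $\widetilde\cM_m$ in place of $\cM_m$), hence $\Rank\widetilde\cM_m=\Rank\widetilde\cM_{m-1}$. Theorem \ref{main-theorem-060525}, applied to $\widetilde\Gamma$ with $S=\{f_1,f_2,f_3\}$, parameter $m$ in the role of $n$ there, and $v=1$, produces a $(\Rank\widetilde\cM_{m-1})$--atomic $K$--representing measure $\widetilde\mu$ for $\widetilde\Gamma$. Finally $\mu:=\widetilde\mu+\Delta\,\delta_0$ is a $K$--representing measure for $\Gamma$ (adding $\Delta\delta_0$ affects only $\Gamma_0$ since $0\in K$), with at most $\Rank\widetilde\cM_{m-1}+1=\Rank\widetilde\cM_m+1=\Rank\cM_m-\Rank\Delta+1\le\Rank\cM_m$ atoms, which is the moreover part here.

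\textbf{Case $n=2m+1$.} From $c\,\cM_m=\cH_{f_1}(m)+\cH_{f_3}(m)$ we get $\cM_m\succeq0$, and from the last three identities together with Lemma \ref{lem:aux-1} we get $\cH_{f_2}(m-1),\cH_{f_1f_3}(m-1)\succeq0$. Decrease $\Gamma_0$ so that the $(1,1)$--block $c\Gamma_0-\Gamma_1$ of $\cH_{f_3}(m)$ equals the generalized Schur complement minimum over its trailing block $\cH_{\x^2(c-\x)}(m-1)=B_m(0)\cH_{f_3}(m)B_m(0)^T$; this is again a decrease, $\Delta:=\Gamma_0-\widetilde\Gamma_0\succeq0$, and makes $\widetilde\cH_{f_3}(m)$ coflat while keeping it psd and $\widetilde\cM_m=\tfrac1c(\cH_{f_1}(m)+\widetilde\cH_{f_3}(m))\succeq0$. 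Part \eqref{lem:aux-3-pt2} of Proposition \ref{lem:aux-3} gives $\Rank\widetilde\cH_{f_3}(m)=\Rank\widetilde\cH_{f_3}(m-1)=:N$. Consider the $f_3$--localized sequence $\Sigma:=(c\widetilde\Gamma_i-\widetilde\Gamma_{i+1})_{i=0}^{2m}$, of even degree $2m$: its moment matrices at levels $m$ and $m-1$ are $\widetilde\cH_{f_3}(m)$ and $\widetilde\cH_{f_3}(m-1)$, hence they have equal rank; its $f_1,f_2,f_3$--localizing matrices at level $m-1$ are $\widetilde\cH_{f_1f_3}(m-1)$, $\widetilde\cH_{f_2f_3}(m-1)$ and $\widetilde\cH_{(c-\x)^2}(m-1)=B_m(c)\widetilde\cM_mB_m(c)^T$, all psd. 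Theorem \ref{main-theorem-060525} applied to $\Sigma$ (parameter $m$, $v=1$, the rank hypothesis being the flatness just obtained) yields a $K$--representing measure $\nu=\sum_j\mathbf c_j\mathbf c_j^T\,\delta_{t_j}$ for $\Sigma$ with $N$ atoms. One then recovers a $K$--representing measure $\widetilde\mu$ for $\widetilde\Gamma$ by ``dividing by $c-\x$'': with
$$\widetilde\mu:=\sum_{t_j\ne c}\tfrac1{c-t_j}\,\mathbf c_j\mathbf c_j^T\,\delta_{t_j}+V_c\,\delta_c,\qquad V_c:=\widetilde\Gamma_0-\sum_{t_j\ne c}\tfrac1{c-t_j}\,\mathbf c_j\mathbf c_j^T,$$
the equalities $\int\x^i\,d\widetilde\mu=\widetilde\Gamma_i$ for $i=0,\dots,2m+1$ follow by induction from $\int\x^{i+1}\,d\widetilde\mu=c\int\x^i\,d\widetilde\mu-\int\x^i\,d\nu$, provided (i) $\nu$ has no atom at $c=\max K$ and (ii) $V_c\succeq0$. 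Then $\mu:=\widetilde\mu+\Delta\,\delta_0$ is a $K$--representing measure for $\Gamma$, and since $\Rank\widetilde\cH_{f_3}(m)\le\Rank\widetilde\cM_m\le\Rank\cM_m$ by Lemma \ref{110722-1428}, it has at most $\Rank\cM_m+p$ atoms (the extra $p$ accounting for the atoms at $c$ and at $0$), which is the moreover part.

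The step I expect to be the main obstacle is establishing (i) and (ii) in the odd case; both should rest on the flatness $\Rank\widetilde\cH_{f_3}(m)=\Rank\widetilde\cH_{f_3}(m-1)$ produced by Proposition \ref{lem:aux-3}. Writing $\widetilde\cH_{f_3}(m-1)=V_{m-1}^TV_{m-1}$ with $V_{m-1}=(C^T\ DC^T\ \cdots\ D^{m-1}C^T)$, $D=\diag(t_1,\dots,t_N)$, flatness forces $V_{m-1}$ to have full row rank $N$; since $B_m(c)$ sends the $j$--th Vandermonde column in the factorization of $\widetilde\cH_{f_3}(m)$ to $(t_j-c)$ times the corresponding column of $\widetilde\cH_{f_3}(m-1)$, this pins down $\Rank\widetilde\cH_{(c-\x)^2}(m-1)$ in terms of $|\{j:t_j=c\}|$ and, via the last statement of Theorem \ref{main-theorem-060525}, controls (and should force to vanish) the mass of $\nu$ at $c$. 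For (ii), unwinding the recursion gives $\widetilde\Gamma_k=c^k\widetilde\Gamma_0-C\bigl(\sum_{l<k}c^{k-1-l}D^l\bigr)C^T$, and a direct computation rewrites $\widetilde\cM_m\succeq0$ as $\xi^TV_c\,\xi+d^T(cI-D)^{-1}d\ge0$ for all $\xi\in\RR^p$ realized as $\xi=\sum_ic^ix_i$ with $d=\sum_iD^iC^Tx_i$; as $(cI-D)^{-1}\succ0$ (since $t_j<c$ once (i) holds), it suffices to realize an arbitrary $\xi$ with $d=0$, which is possible because the full row rank of $V_{m-1}$ makes the support of $\nu$ span the interpolation space separating $c$ from $\{t_j\}$. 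Making this last point fully rigorous --- extracting a degree--$\le m$ kernel polynomial from the flatness --- is the delicate part; the rest is bookkeeping with Lemma \ref{lem:aux-1}, Theorem \ref{block-psd} and the displayed identities.
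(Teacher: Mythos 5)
Your even-case argument is correct and is essentially identical to the paper's: shrink $\Gamma_0$ to make $\widetilde\cM_m$ coflat, invoke Proposition \ref{lem:aux-3}(1) to get flatness $\Rank\widetilde\cM_m=\Rank\widetilde\cM_{m-1}$, apply Theorem \ref{main-theorem-060525} to $\widetilde\Gamma$, and add back $\Delta\,\delta_0$. The rank bookkeeping $\Rank\cM_m=\Rank\widetilde\cM_m+\Rank\Delta$ (via the Schur complement structure) also matches and gives the atom count.

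Your odd-case argument is a genuinely different route, and it has a genuine gap exactly where you flagged it. After shrinking $\Gamma_0$ to make $\widetilde\cH_{f_3}(m)$ coflat and obtaining flatness from Proposition \ref{lem:aux-3}(2), the paper does \emph{not} pass to the localized sequence $\Sigma$ and divide by $c-\x$. Instead, it reads off block-column relations $\widetilde Q_0,\dots,\widetilde Q_m$ from the flat $\widetilde\cH_{c-\x}(m)$, uses them to define $\Gamma_{2m+2},\Gamma_{2m+3},\Gamma_{2m+4}$, verifies that the resulting even-degree extension $\widetilde\Gamma^{(2m+4)}$ satisfies all rank and positivity hypotheses of Theorem \ref{main-theorem-060525}, and applies that theorem directly. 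This sidesteps both of your problems (i) and (ii) entirely.

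In your route, (i) (no atom of $\nu$ at $c$) is not a consequence of the flatness of $\widetilde\cH_{f_3}$ alone. The ``moreover'' count in Theorem \ref{main-theorem-060525} says the number of atoms of $\nu$ at $c$ is $\Rank\widetilde\cH_{f_3}(m-1)-\Rank\widetilde\cH_{(c-\x)^2}(m-1)$, and the flatness chain
\[
\Rank\widetilde\cH_{f_3}(m)=\Rank\widetilde\cH_{f_3}(m-1)=\Rank\widetilde\cH_{\x f_3}(m-1)=\Rank\widetilde\cH_{\x^2 f_3}(m-1)
\]
controls the ranks on the $\x$-side only; the identity $c\,\cH_{f_3}(m-1)=\cH_{\x f_3}(m-1)+\cH_{(c-\x)^2}(m-1)$ gives merely $\Rank\widetilde\cH_{(c-\x)^2}(m-1)\le \Rank\widetilde\cH_{f_3}(m-1)$ via Lemma \ref{110722-1428}, not equality. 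Ruling out atoms at $c$ therefore needs the \emph{other} positivity assumptions (in particular $\widetilde\cH_{f_1}(m)\succeq 0$), and you never carry out that argument; your phrase ``this pins down \dots and, via the last statement of Theorem \ref{main-theorem-060525}, controls (and should force to vanish) the mass of $\nu$ at $c$'' is wishful. Likewise (ii), the positivity of $V_c$, is asserted to follow from $\widetilde\cM_m\succeq 0$ via an interpolation argument you describe as ``the delicate part'' without completing it. Also note that it would be circular to argue that since $\widetilde\Gamma$ has a representing measure $\widetilde\mu$, the localization $(c-\x)\,d\widetilde\mu$ has no atom at $c$: the existence of $\widetilde\mu$ is precisely what you are trying to prove. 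Since (i) and (ii) are the only nontrivial content of the odd case in your approach, the proposal as written does not prove part \eqref{main-theorem-1-pt3b}$\Rightarrow$\eqref{main-theorem-1-pt2}; compare with the paper's sequence-extension argument, which needs only the rank-propagation checks \eqref{RG-relation-2}--\eqref{RG-v3} and no division by $c-\x$.
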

\bigskip

\begin{proof}
The nontrivial implication is $\eqref{main-theorem-1-pt3}\Rightarrow\eqref{main-theorem-1-pt2}.$
By applying an an affine linear transformation of variables we may assume that $a=0$, $b=1$ and $c>1$.
Hence, $f_1(\x)=\x$, $f_2(\x)=\x(\x-1)$, $f_3(\x)=c-\x$.

First assume that $n=2m$, $m\in \NN$. Note that $\Gamma_0$ only appears in $\cM_m$,
but not in any of $\cH_{\x(\x-1)}(m-1)$, $\cH_{\x(c-\x)}(m-1)$.
Let us replace $\Gamma_0$ by the smallest $\widetilde\Gamma_0$ such that $\widetilde \cM_m\succeq 0$,
where $\widetilde \cM_\ell$ is the moment matrix corresponding to 
    $\widetilde \Gamma\equiv (\widetilde \Gamma_0,\Gamma_1,\ldots,\Gamma_{2\ell})$, $1\leq\ell\leq m$.
Namely, by Theorem \ref{block-psd}, used for the pair $(\cM,C)=(\widetilde\cM_m,\cH_{\x^2}(m-1))$,
we have that 
$$
\widetilde \Gamma_0=  
    \begin{pmatrix}
        \Gamma_1 & \cdots & \Gamma_m
    \end{pmatrix}
    \big(\cH_{\x^2}(m-1)\big)^\dagger 
    \begin{pmatrix}
        \Gamma_1 \\ \cdots \\ \Gamma_m
    \end{pmatrix}
$$
and
$$
    \Rank \widetilde{\cM}_{m}=\Rank \cH_{\x^2}(m-1).
$$
By Proposition \ref{lem:aux-3}, we have that $\Rank \widetilde \cM_m=\Rank \widetilde \cM_{m-1}$.
By Theorem \ref{main-theorem-060525}, it follows that 
$\widetilde \Gamma$ has a $K$--\CB{representing matrix-valued measure}
of the form 
$\sum_{i=1}^{r} \mbf{c}_i\mbf{c}_i^T\delta_{d_i},$
where $r=\Rank \widetilde \cM_m$ and $c_i\in \RR^{p}$.
Then
$\sum_{i=1}^{r} \mbf{c}_i\mbf{c}_i^T\delta_{d_i}+(\Gamma_0-\widetilde \Gamma_0)\delta_{0}$ is a $(\Rank \cM_m)$--atomic $K$--\CB{representing matrix-valued measure} for $\Gamma$.
This proves $\eqref{main-theorem-1-pt3a}\Rightarrow\eqref{main-theorem-1-pt2}$.

Now assume that $n=2m+1$, $m\in \NN$. Note that $\Gamma_0$ only appears in $\cH_{c-\x}(m)$,
but not in any of $\cH_{\x}(m)$, $\cH_{\x^2(\x-1)}(m-1)$, $\cH_{\x(\x-1)(c-\x)}(m-1)$.
Let us replace $\Gamma_0$ by the smallest $\widetilde\Gamma_0$ such that $\widetilde \cH_{c-\x}(m)\succeq 0$,
where $\widetilde \cH_{c-\x}(m)$ is the moment matrix corresponding to 
    $\widetilde \Gamma\equiv (\widetilde \Gamma_0,\Gamma_1,\ldots,\Gamma_{2m+1})$.
Below $\widetilde \cM_\ell$ and
$\widetilde\cH_{f}(\ell)$ will refer to the moment matrix and the $f$--localizing moment matrix of $\widetilde\Gamma$, respectively.
By Theorem \ref{block-psd}, used for the pair 
$$(\cM,C)=(\widetilde\cH_{c-\x}(m),\cH_{\x^2(c-\x)}(m-1)),$$
we have that 
\begin{align*}
\widetilde \Gamma_0
    &= 
    \frac{1}{c}
    \Gamma_1+ 
    \frac{1}{c}
    \begin{pmatrix}
        c\Gamma_1-\Gamma_2 & \cdots & c\Gamma_m-\Gamma_{m+1}
    \end{pmatrix}
    \big(\cH_{\x^2(c-\x)}(m-1)\big)^\dagger
    \begin{pmatrix}
        c\Gamma_1-\Gamma_2 \\ \cdots \\ c\Gamma_m-\Gamma_{m+1}
    \end{pmatrix}
\end{align*}
and
\begin{align*}
\Rank \widetilde\cH_{c-\x}(m)
    &=\Rank \cH_{\x^2(c-\x)}(m-1).
\end{align*}
By Proposition \ref{lem:aux-3}, we have that $\Rank \widetilde\cH_{c-\x}(m)=\Rank \widetilde\cH_{c-\x}(m-1)$.
Let $Q_0,\ldots,Q_{m-1}\in M_p(\RR)$ be such that 
\begin{align} 
\notag
    c\Gamma_{m}-\Gamma_{m+1}
    &=(c\widetilde\Gamma_0-\Gamma_1) Q_0+\sum_{i=1}^{m-1}(c\Gamma_i-\Gamma_{i+1}) Q_i,\\
\label{RG-relation-2-v2}
    c\Gamma_{m+j}-\Gamma_{m+j+1}
    &=\sum_{i=0}^{m-1}(c\Gamma_{i+j}-\Gamma_{i+1+j}) Q_i,\quad j=1,\ldots,m. 
\end{align}
Equivalently, defining 
\begin{align*}
    \widetilde Q_0&:=-cQ_0,\\
    \widetilde Q_i&:=Q_{i-1}-cQ_i\quad 
        \text{for}\quad i=1,\ldots,m-1,\\
    \widetilde Q_m&:=cI+Q_{m-1},
\end{align*}
where $I$ is the identity matrix,
we have that
\begin{align} 
\notag
    \Gamma_{m+1}
    &=
    \widetilde\Gamma_0\widetilde Q_0+\sum_{i=1}^{m} \Gamma_i \widetilde Q_i,\\
\label{RG-relation-2}
    \Gamma_{m+j+1}
    &=
    \Gamma_j\widetilde Q_0+\sum_{i=1}^m \Gamma_{i+j} \widetilde Q_i, \quad j=1,\ldots,m. 
\end{align}
Let $\widetilde \cM_\ell$ stand for the $\ell$--th truncated moment matrix of 
$\widetilde \Gamma$.
Observe that
    $$
        \widetilde\cM_m=
            \frac{1}{c}
            \left(
                \widetilde\cH_{f_1}(m)+\widetilde\cH_{f_3}(m)
            \right),$$
whence $\widetilde\cM_m\succeq 0$.
Let us define $\Gamma_{2m+2}, \Gamma_{2m+3}, \Gamma_{2m+4}$ by \eqref{RG-relation-2} used for $j=m+1,m+2,m+3$.
Note that 
\begin{align} 
\label{RG-v4}
\Gamma_{m+3+j}-\Gamma_{m+2+j}
    &=\sum_{i=0}^{m} (\Gamma_{i+2+j}-\Gamma_{i+1+j})\widetilde Q_i
        \quad \text{for}\quad j=0,\ldots,m+1,\\
\label{RG-v3}
    c\Gamma_{m+2+j}-\Gamma_{m+3+j}
    &=\sum_{i=0}^{m} (c\Gamma_{i+1+j}-\Gamma_{i+2+j})\widetilde Q_i
        \quad \text{for}\quad j=0,\ldots,m+1.
\end{align}
By definition of $\Gamma_{2m+2}$ we have that $\Rank\widetilde \cM_{m+1}=\Rank \widetilde\cM_m$.
If $\Gamma_{2m+2}\neq \widetilde \cM_{m+1}/\widetilde\cM_m$, then
 the equality of ranks cannot hold, whence 
$\Gamma_{2m+2}=\widetilde \cM_{m+1}/\widetilde\cM_m$,
$\Gamma_{2m+2}$ is symmetric
and $\widetilde\cM_{m+1}\succeq 0$. Similarly,
by \eqref{RG-relation-2-v2} and
\eqref{RG-v3} we have that 
 $\Rank\widetilde \cH_{c-\x}(m+1)=\Rank \widetilde\cH_{c-\x}(m)$, whence $\Gamma_{2m+3}$ is symmetric,
 $\widetilde\cH_{c-\x}(m+1)\succeq 0$
 and
 $\widetilde\cH_{\x^2(c-\x)}(m+1)\succeq 0$.
Further, by definition of $\Gamma_{2m+4}$ we have that $\Rank\widetilde \cM_{m+2}=\Rank \widetilde\cM_{m+1}$, whence 
$\Gamma_{2m+4}$ is symmetric
and $\widetilde \cM_{m+2}\succeq 0$.
By \eqref{RG-relation-2}, it follows that
$\Rank\widetilde\cH_{\x}(m+1)=\Rank \widetilde\cH_{\x}(m)$,
whence $\widetilde\cH_{\x}(m+1)\succeq 0$.
Finally, by \eqref{RG-v4}, we have that
$\Rank\widetilde\cH_{\x^2(\x-1)}(m)=\Rank \widetilde\cH_{\x^2(\x-1)}(m-1)$,
whence $\widetilde\cH_{\x^2(\x-1)}(m)\succeq 0$.
By Theorem \ref{main-theorem-060525}, it follows that 
$\widetilde \Gamma$ has a $K$--\CB{representing matrix-valued measure}
of the form 
$\sum_{i=1}^{r} \mbf{c}_i\mbf{c}_i^T\delta_{d_i},$
where $r=\Rank \widetilde \cM_m$ and $c_i\in \RR^{p}$.
Then
$\sum_{i=1}^{r} \mbf{c}_i\mbf{c}_i^T\delta_{d_i}+(\Gamma_0-\widetilde \Gamma_0)\delta_{0}$ is at most a 
$(\Rank \widetilde\cM_m+\Rank \Gamma_0-\widetilde \Gamma_0)$--atomic $K$--\CB{representing matrix-valued measure} for $\Gamma$.
This proves $\eqref{main-theorem-1-pt3b}\Rightarrow\eqref{main-theorem-1-pt2}$.
\end{proof}
\bigskip

Now we are ready to prove Theorem \ref{thm:bounded-interval}. 

\begin{proof}[Proof of Theorem \ref{thm:bounded-interval}] 
    Assume the notation from Theorem \ref{main-theorem-1}
    and let 
        $$
        \widetilde S=
        \left\{
        \begin{array}{rl}
            \{1,f_2,f_1f_3\}
            ,&   \text{if }n\text{ is even},\\[0.2em]
            \{f_1,f_3,f_1f_2,f_2f_3\},&   \text{if }n\text{ is odd}.
        \end{array}
        \right.
        $$
    We have to prove that
    \begin{equation}
    \label{eq-to-prove}
        \Pos_n^{(p)}(\{a\}\cup [b,c])=\QM_{\widetilde S,n}^{(p)}.
    \end{equation}
This follows by using
Theorem \ref{main-theorem-1} and 
Proposition \ref{prop:140525} for $K=\{a\}\cup [b,c]$.

Namely, assume that $n=2m$. 
Note that 
\begin{align}
\label{equivalence-140525}
\begin{split}
    &\cM_m\succeq 0\\
    \Leftrightarrow& \;\;
    \langle \cM_m,B \rangle\geq 0
    \;\; \text{for every }
        B\in \Sym_{(m+1)p}^{\succeq 0}\\[0.2em]
    \Leftrightarrow& \;\;
    \langle \cM_m,
        \widetilde B \widetilde B^T \rangle\geq 0
    \;\; \text{for every }
        \widetilde B=
        (\widetilde B_i)_{i=0}^{m}\in
        (M_p(\RR))^{m+1}\\[0.2em]
    \Leftrightarrow& \;\;
    \sum_{i,j=0}^m \tr(\widetilde B_i^T \Gamma_{i+j} \widetilde B_j)\geq 0
    \;\; \text{for every }
        \widetilde B=
        (\widetilde B_i)_{i=0}^{m}\in
        (M_p(\RR))^{m+1}\\[0.2em]
    \Leftrightarrow& \;\;
    \sum_{i,j=0}^m 
        \tr(\Gamma_{i+j} 
        \widetilde B_j
        \widetilde B_i^T )\geq 0
    \;\; \text{for every }
        \widetilde B=
        (\widetilde B_i)_{i=0}^{m}\in
        (M_p(\RR))^{m+1}\\[0.2em]
    \Leftrightarrow& \;\;
    \sum_{k=0}^n \tr(\Gamma_k A_k)\geq 0
    \;\; \text{for every }
        \sum_{i=0}^{n} A_{i}\x^i
        =
        \big(\sum_{j=0}^m \widetilde B_j \x^j\big)
        \big(\sum_{j=0}^m \widetilde B_j \x^j\big)^T
        \in 
            M_p(\RR[\x]_{\leq n})\\[0.2em]
    \Leftrightarrow& \;\;
    \sum_{k=0}^n \tr(\Gamma_k A_k)\geq 0
    \;\; \text{for every }
        \sum_{i=0}^{n} A_{i}\x^i
        \in 
            \sum M_p(\RR[\x])^2.
\end{split}
\end{align}
where the second equivalence follows by noting that every $B\in \Sym_{(m+1)p}^{\succeq 0}$ is a sum of the form $\widetilde B \widetilde B^T$
with $\widetilde B\in (M_p(\RR))^{m+1}$,
while the third equivalence follows by definition of the inner product and $\cM_m$.
Similarly, for 
    $$f:=c_2\x^2+c_1\x+c_0\in \{f_2,f_1f_3\},$$
we have that
\begin{align}
\label{equivalence-2-140525}
\begin{split}
    &\;\; \cH_{f}(m-1)\succeq 0\\[0.2em]
    \Leftrightarrow&\;\; 
    \langle \cH_{f}(m-1,C \rangle\geq 0
    \;\; \text{for every }
        C\in \Sym_{mp}^{\succeq 0}\\[0.2em]
    \Leftrightarrow&\;\; 
    \langle \cH_{f}(m-1),
        \widetilde C^T \widetilde C \rangle\geq 0
    \;\; \text{for every }
        \widetilde C=
        (\widetilde C_i)_{i=0}^{m-1}\in
        (M_p(\RR))^{m}\\[0.2em]
    \Leftrightarrow&\;\;
    \sum_{k=0}^{n-2} \tr(\Gamma^{(f)}_k A_k)\geq 0
    \;\; \text{for every }
        \sum_{i=0}^{n-2} A_{i}\x^i
        \in 
            \sum M_p(\RR[\x])^2
            \\[0.2em]
    \Leftrightarrow&\;\;
    \sum_{k=0}^{n-2} 
        \tr\big(
            (\Gamma_{k+2} c_2+
            \Gamma_{k+1} c_1+ 
            \Gamma_{k} c_0)
            A_k\big)\geq 0
    \;\; \text{for every }
        \sum_{i=0}^{n-2} A_{i}\x^i
        \in 
            \sum M_p(\RR[\x])^2
            \\[0.2em]
    \Leftrightarrow&\;\;
    \sum_{k=0}^{n} 
        \tr(\Gamma_k \widetilde A_k)\geq 0
    \;\;\text{for every }
        \sum_{i=0}^{n} \widetilde A_{i}\x^i=
        f\big(\sum_{i=0}^{n-2} A_{i}\x^i\big)
        \;\;\text{with}\\[0.2em]
    &\hspace{6cm}\sum_{i=0}^{n-2} A_{i}\x^i
        \in 
            \sum M_p(\RR[\x])^2\\[0.2em]
    \Leftrightarrow&\;\;
    \sum_{k=0}^{n} 
        \tr(\Gamma_k \widetilde A_k)\geq 0
    \;\;\text{for every }
        \sum_{i=0}^{n} \widetilde A_{i}\x^i\in \QM_{\{f\},n}^{(p)},\\[0.2em]          
\end{split}
\end{align}
By Theorem \ref{main-theorem-1}, Proposition \ref{prop:140525} and \eqref{equivalence-140525}, \eqref{equivalence-2-140525},
we have that
\begin{align}
\label{equivalence-v3-1405}
\begin{split}
&\quad
    \sum_{k=0}^n \tr(\Gamma_k A_k)\geq 0
    \quad \text{for every }\sum_{i=0}^{n} A_{i}x^i\in \Pos^{(p)}_n(\{a\}\cup [b,c])\\
\Leftrightarrow&
\quad    
    \sum_{k=0}^n \tr(\Gamma_k A_k)\geq 0
    \quad \text{for every }\sum_{i=0}^{n} A_{i}x^i\in
        \QM_{\widetilde S,n}^{(p)}.
\end{split}
\end{align}
Since $\QM_{\widetilde S,n}^{(p)}$ is closed, \eqref{equivalence-v3-1405} implies \eqref{eq-to-prove}.
Indeed, if $\QM_{\widetilde S,n}^{(p)}\not\subseteq\Pos^{(p)}_n(\{a\}\cup [b,c])$, then there is 
$\sum_{k=0}^n \widetilde A_k\x^k\in \Pos^{(p)}_n\big(\{a\}\cup [b,c]\big)\setminus \QM_{\widetilde S,n}^{(p)}$.
By the Hahn-Banach theorem there is 
$\widetilde\Gamma:=(\widetilde\Gamma_0,\ldots,\widetilde \Gamma_n)$
such that 
$\sum_{k=0}^n \tr(\widetilde\Gamma_k \widetilde A_k)<0$ and 
$\sum_{k=0}^n \tr(\widetilde \Gamma_k A_k)\geq 0$ for every $\sum_{i=0}^n A_i\x^i\in \QM_{\widetilde S,n}^{(p)}$.
But this is a contradiction with \eqref{equivalence-v3-1405}.

The proof for $n$ of odd parity is analogous.
\end{proof}

\bigskip

\begin{remark}
\begin{enumerate}
\item 
By analogous reasoning as in this section one can obtain an alternative proof of the \CB{truncated matrix-valued moment problem on $[a,b]$} and the corresponding matrix Positivstellensatz. This proof seems to be more elementary than all of the existing proofs of this case (e.g.\ \cite{And70,DS02,CDFK06,DFKM10}).
\item 
The main reason why the approach in this section works is the fact that by subtracting the largest possible matrix mass in the isolated point, we obtain a coflat moment matrix in the case of even degree data and a coflat $(c-\x)$--localizing moment matrix in the case of odd degree data.
Then Proposition \ref{lem:aux-3} implies that the manipulated sequence is $K$--flat and the conclusion follows by Theorem \ref{main-theorem-060525}. In particular, this implies that if a $K$--\CB{representing matrix-valued measure} exists, there always exists a $K$--\CB{representing matrix-valued measure} with the largest matrix mass at the isolated point. If $K$ has more components, more localizing matrices bound from above the mass of the atom in some chosen point, and in general we cannot achieve coflatness. 
It would be very interesting to extend the approach to such $K$, by possibly achieving coflatness in a more involved way. 
\item 
It would be interesting to extend the results of this paper from matrix polynomials to operator polynomials where the coefficients are bounded operators on a Hilbert space.
The operator Fej\'er-Riesz theorem for $\RR$ is true in this setting \cite{Rosenblum} and also its version for a bounded or unbounded interval \cite[Proposition 3]{CZ13}, while it fails for $K$, which is a single point \cite[\S5.1]{CZ13}.
\end{enumerate}
\end{remark}


\CB{\section{Proof of Theorems \ref{thm:unbounded-interval} and \ref{thm:two-intervals}}
\label{sec:psatze-one-unbounded-interval}
In this section we prove Theorems \ref{thm:unbounded-interval} and \ref{thm:two-intervals},
using Theorem \ref{thm:bounded-interval} and elementary algebraic manipulations.

\subsection{Proof of Theorem \ref{thm:unbounded-interval}}
Let us assume, that $F$ is a matrix polynomial, positive semidefinite on
$K=\{a\}\cup [b,\infty)$, $a<b$. Denote $n:=\deg F$.
Define $\phi(x)=-\frac{1}{x-a+1}$ and note that $\phi(K)=\{-1\}\cup [-\frac{1}{b-a+1},0)$.
Then the polynomial 
    $$H(x):=(-x)^{n}F\big(-\frac{1}{x}+a-1\big)$$ 
is positive semidefinite on
$\overline{\phi(K)}=\{-1\}\cup [-\frac{1}{b-a+1},0]$.
Note that
\begin{equation} 
\label{express-F-with-G}
    F(x)=(x-a+1)^{n}H\big(-\frac{1}{x-a+1}\big).
\end{equation}
Expressing $F$ in the basis 
$(x-a+1)^{i}$, $i=0,\ldots,n$,
as
$F(x)=\sum_{i=0}^{n} F_i (x-a+1)^i$ with $F_i\in \Sym_p$, we have 
$$H(x)=\sum_{i=0}^{n} F_i(-x)^{n-i}=F_0(-x)^{n}+F_1(-x)^{n-1}+\ldots+F_{n}.$$
We separate two cases according to the parity of $\deg H$.\\

\noindent \textbf{Case 1:} $\deg H$ is even. 
By Theorem \ref{thm:bounded-interval}, there are matrix polynomials $H_0(x)$, $H_1(x)$, $H_2(x)$, each a sum of at most two symmetric squares, such that
\begin{equation} 
\label{certificate-G}
H(x)=H_0(\x)+(\x+1)\Big(\x+\frac{1}{b-a+1}\Big)H_1(\x)-(\x+1)\x H_2(\x),
\end{equation}
where the degree of each summand is at most $\deg H$.
Note that $n=2\lfloor n/2\rfloor+(n\;\text{mod}\; 2)$. 
Replacing $H$ in the equality \eqref{express-F-with-G}
with the right-hand side of \eqref{certificate-G}, we get
\begin{equation}
\label{F-in-terms-of-G}
    F(x)=(x-a+1)^{n\;\text{mod}\; 2}\big(K_0(x)+(x-a)(x-b)K_1(x)+(x-a)K_2(x)\big),
\end{equation}
where 
\begin{align*} 
    K_0(x)&=(x-a+1)^{2\lfloor n/2\rfloor}\;H_0\big(-\frac{1}{x-a+1}\big),\\
    K_1(x)&=\frac{1}{b-a+1}(x-a+1)^{2\lfloor n/2\rfloor-2}\;H_1\big(-\frac{1}{x-a+1}\big),\\
    K_2(x)&=(x-a+1)^{2\lfloor n/2\rfloor-2}\;H_2\big(-\frac{1}{x-a+1}\big)
\end{align*}
are matrix polynomials, positive semidefinite on $\RR$.
Indeed, \eqref{F-in-terms-of-G} follows from the fact that for
$$f_1(x):=(x+1)\big(\x+\frac{1}{b-a+1}\big)
\quad
\text{and}\quad 
f_2(x):=-(x+1)x,$$ 
we have
\begin{align*} 
(x-a+1)^2f_1\big(-\frac{1}{x-a+1}\big)&=\frac{(x-a)(x-b)}{b-a+1},\\
(x-a+1)^2f_2\big(-\frac{1}{x-a+1}\big)&=x-a.
\end{align*}
Using \eqref{F-in-terms-of-G}, it follows that 
\begin{equation}
\label{F-with-G-v3}
F(\x)=
        G_0(\x)+(\x-a)G_1(\x)+(\x-a)(\x-b)G_2(\x)+
        (\x-a)^2(\x-b)G_3(\x),
\end{equation}
where
\begin{align*}
G_0(x)
&:=
\left\{
    \begin{array}{rl}
        K_0(x),&   \text{if }n\;\text{mod}\;2=0,\\
        K_0(x)+(x-a)^2K_2(x),&   \text{if }n\;\text{mod}\;2=1,
    \end{array}
\right.\\
G_1(x)
&:=
\left\{
    \begin{array}{rl}
        K_2(x),&   \text{if }n\;\text{mod}\;2=0,\\
        K_0(x)+K_2(x),&   \text{if }n\;\text{mod}\;2=1,
    \end{array}
\right.\\
G_2(x)
&:=K_1(x),\\
G_3(x)
&:=
\left\{
    \begin{array}{rl}
        0,&   \text{if }n\;\text{mod}\;2=0,\\
        K_1(x),&   \text{if }n\;\text{mod}\;2=1.
    \end{array}
\right.
\end{align*}
Note that the degree of each summand on the right-hand side of \eqref{F-with-G-v3} is bounded 
by $\deg F$ and each $G_i(x)$ is positive semidefinite on $\RR$, whence by Theorem \ref{thm:Fejer-Riesz} it is a sum of at most two symmetric squares. This proves Theorem \ref{thm:unbounded-interval} for the case $\deg H$ is even.\\

\noindent \textbf{Case 2:} $\deg H$ is odd. 
By Theorem \ref{thm:bounded-interval}, there are matrix polynomials $H_0(x)$, $H_1(x)$, $H_2(x)$, $H_3(x)$, each a sum of at most two symmetric squares, such that
\begin{align} 
\label{certificate-G-v2}
\begin{split}
H(x)&=
(x+1)H_0(\x)-xH_1(\x)+(\x+1)^2\big(x+\frac{1}{b-a+1}\big) H_2(\x)\\
&\hspace{4cm}
-(\x+1)\big(x+\frac{1}{b-a+1}\big)xH_3(\x),
\end{split}
\end{align}
where the degree of each summand is at most $\deg H$.
Note that $n=2\lfloor n/2\rfloor+(n\;\text{mod}\; 2)$. 
Replacing $H$ in the equality \eqref{express-F-with-G}
with the right-hand side of \eqref{certificate-G-v2}, we get
\begin{align}
\label{F-in-terms-of-G-v2}
\begin{split}
    F(x)&=
    (x-a+1)^{n\;\text{mod}\; 2+1}
    \Big((x-a)K_0(x)+K_1(x)+(x-a)^2(x-b)K_2(x)\\
    &\hspace{5cm}+(x-a)(x-b)K_3(x)\Big),
\end{split}
\end{align}
where 
\begin{align*} 
    K_i(x)&=(x-a+1)^{2\lfloor n/2\rfloor-2}\;H_i\big(-\frac{1}{x-a+1}\big)
    \quad\text{for }i=0,1,
    \\
    K_i(x)&=\frac{1}{b-a+1}(x-a+1)^{2\lfloor n/2\rfloor-4}\;H_i\big(-\frac{1}{x-a+1}\big)
    \quad\text{for }i=2,3,
\end{align*}
are matrix polynomials, positive semidefinite on $\RR$.
Indeed, \eqref{F-in-terms-of-G-v2} follows from the fact that for
\begin{align*}
    f_3(x)&:=x+1,\quad 
    f_4(x):=-x, \quad
    f_5(x):=(x+1)^2\big(x+\frac{1}{b-a+1}\big),\\
    f_6(x)&:=-(x+1)\big(x+\frac{1}{b-a+1}\big)x,
\end{align*}
we have
\begin{align*} 
(x-a+1)f_3\big(-\frac{1}{x-a+1}\big)&=x-a,\\
(x-a+1)f_4\big(-\frac{1}{x-a+1}\big)&=1,\\
(x-a+1)^3f_5\big(-\frac{1}{x-a+1}\big)&=\frac{(x-a)^2(x-b)}{b-a+1},\\
(x-a+1)^3f_6\big(-\frac{1}{x-a+1}\big)&=\frac{(x-a)(x-b)}{b-a+1}.
\end{align*}
Using \eqref{F-in-terms-of-G-v2}, it follows that 
\begin{equation}
\label{F-with-G-v4}
F(\x)=
        G_0(\x)+(\x-a)G_1(\x)+(\x-a)(\x-b)G_2(\x)+
        (\x-a)^2(\x-b)G_3(\x),
\end{equation}
where
\begin{align*}
G_0(x)
&:=
\left\{
    \begin{array}{rl}
        (x-a)^2K_0(x)+K_1(x),&   \text{if }n\;\text{mod}\;2=0,\\
        (x-a)^2K_1(x),&   \text{if }n\;\text{mod}\;2=1,
    \end{array}
\right.\\
G_1(x)
&:=
\left\{
    \begin{array}{rl}
        K_0(x)+K_1(x),&   \text{if }n\;\text{mod}\;2=0,\\
        (x-a)^2K_0(x),&   \text{if }n\;\text{mod}\;2=1,
    \end{array}
\right.\\
G_2(x)
&:=
\left\{
    \begin{array}{rl}
        (x-a)^2K_2(x)+K_3(x),&   \text{if }n\;\text{mod}\;2=0,\\
        (x-a)^2K_3(x),&   \text{if }n\;\text{mod}\;2=1,
    \end{array}
\right.\\
G_3(x)
&:=
\left\{
    \begin{array}{rl}
        K_2(x)+K_3(x),&   \text{if }n\;\text{mod}\;2=0,\\
        (x-a)^2K_2(x),&   \text{if }n\;\text{mod}\;2=1.
    \end{array}
\right.
\end{align*}
Note that the degree of each summand on the right-hand side of \eqref{F-with-G-v4} is bounded 
by $\deg F$ and each $G_i(x)$ is positive semidefinite on $\RR$, whence by Theorem \ref{thm:Fejer-Riesz} it is a sum of at most two symmetric squares. This proves Theorem \ref{thm:unbounded-interval} for the case $\deg H$ is odd.
\qed
\bigskip

\subsection{Proof of Theorem \ref{thm:two-intervals}}
Applying an affine linear transformation we can assume without loss of generality that $a=-1$, $b=1$ and $c>1$.
Let $F$ be a matrix polynomial, positive semidefinite on
$K=(-\infty,-1]\cup\{1\}\cup [c,\infty)$. Note that $\deg F$ is even and write $2n:=\deg F$, $n\in \NN\cup \{0\}$.
Define $\phi(x)=-\frac{1}{x}$ and note that 
    $\overline{\phi(K)}=\{-1\}\cup[-\frac{1}{c},1]$.
Then the polynomial 
    $$H(x):=x^{2n}F\big(-\frac{1}{x}\big)$$ 
is positive semidefinite on
$\overline{\phi(K)}$.
Note that
\begin{equation} 
\label{express-F-with-G-new}
    F(x)=x^{2n}H\big(-\frac{1}{x}\big).
\end{equation}
Writing $F(x)=\sum_{i=0}^{2n}F_ix^{i}$, we have 
$H(x)=\sum_{i=0}^{2n} F_i(-x)^{2n-i}.$
We separate two cases according to the parity of $\deg H$.\\

\noindent \textbf{Case 1:} $\deg H$ is even. 
By Theorem \ref{thm:bounded-interval}, there are matrix polynomials $H_0(x)$, $H_1(x)$, $H_2(x)$, each a sum of at most two symmetric squares, such that
\begin{equation} 
\label{certificate-G-new}
H(x)=
    H_0(\x)
    +(\x+1)\big(\x+\frac{1}{c}\big)H_1(\x)
    +(\x+1)(1-x)H_2(\x),
\end{equation}
where the degree of each summand is at most $\deg H$.
Replacing $H$ in the equality \eqref{express-F-with-G-new}
with the right-hand side of \eqref{certificate-G-new}, we get
\begin{equation}
\label{F-in-terms-of-G-new}
    F(x)=G_0(x)+(x-c)(x-1)G_1(x)+(x+1)(x-1)G_2(x),
\end{equation}
where 
\begin{align*} 
    G_0(x)&=x^{2n}H_0\big(-\frac{1}{x}\big),\quad
    G_1(x)=\frac{1}{c}x^{2n-2}\;H_1\big(-\frac{1}{x}\big),\quad
    G_2(x)=x^{2n-2}\;H_2\big(-\frac{1}{x}\big)
\end{align*}
are matrix polynomials, positive semidefinite on $\RR$.
Note that the degree of each summand on the right-hand side of \eqref{F-in-terms-of-G-new} is bounded 
by $2n$ and each $G_i(x)$ is positive semidefinite on $\RR$, whence by Theorem \ref{thm:Fejer-Riesz} it is a sum of at most two symmetric squares. This proves Theorem \ref{thm:two-intervals} for the case $\deg H$ is even.\\

\noindent \textbf{Case 2:} $\deg H$ is odd. 
By Theorem \ref{thm:bounded-interval}, there are matrix polynomials $H_0(x)$, $H_1(x)$, $H_2(x)$, $H_3(x)$, each a sum of at most two symmetric squares, such that
\begin{align} 
\label{certificate-G-v2-new}
\begin{split}
H(x)&=
(x+1)H_0(\x)+(1-x)H_1(\x)+(\x+1)^2\big(x+\frac{1}{c}\big) H_2(\x)\\
&\hspace{4cm}
+(\x+1)\big(x+\frac{1}{c}\big)(1-x)H_3(\x),
\end{split}
\end{align}
where the degree of each summand is at most $\deg H$. 
Replacing $H$ in the equality \eqref{express-F-with-G-new}
with the right-hand side of \eqref{certificate-G-v2-new}, we get
\begin{align}
\label{F-in-terms-of-G-v2-new}
\begin{split}
    F(x)&=x\Big((x-1)K_0(x)+(x+1)K_1(x)+(x-1)^2(x-c)K_2(x)\\
    &\hspace{5cm}+(x-1)(x+1)(x-c)K_3(x)\Big),
\end{split}
\end{align}
where 
\begin{align*} 
    K_i(x)&=x^{2n-2}\;H_i\big(-\frac{1}{x}\big)
    \quad\text{for }i=0,1,
    \\
    K_i(x)&=\frac{1}{c}x^{2n-4}\;H_i\big(-\frac{1}{x}\big)
    \quad\text{for }i=2,3,
\end{align*}
are matrix polynomials, positive semidefinite on $\RR$.
Using \eqref{F-in-terms-of-G-v2-new} and $x=\frac{x-1}{2}+\frac{x+1}{2}$, it follows that 
\begin{equation}
\label{F-with-G-v4-new}
F(\x)=
        G_0(\x)+(\x+1)(x-1)G_1(\x)+(\x-1)(\x-c)G_2(\x)+
        (x+1)(\x-1)^2(\x-c)G_3(\x),
\end{equation}
where
\begin{align*}
G_0(x)
&:=\frac{(x-1)^2}{2}K_0(x)+\frac{(x+1)^2}{2}K_1(x),\\
G_1(x)
&:=\frac{1}{2}K_0(x)+\frac{1}{2}K_1(x)\\
G_2(x)
&:=\frac{(x-1)^2}{2}K_2(x)+\frac{(x+1)^2}{2}K_3(x)\\
G_3(x)
&:=\frac{1}{2}K_2(x)+\frac{1}{2}K_3(x).
\end{align*}
Note that the degree of each summand on the right-hand side of \eqref{F-with-G-v4-new} is bounded 
by $\deg F$ and each $G_i(x)$ is positive semidefinite on $\RR$, whence by Theorem \ref{thm:Fejer-Riesz} it is a sum of at most two symmetric squares. This proves Theorem \ref{thm:two-intervals} for the case $\deg H$ is odd.
\qed

}

\bigskip\bigskip

\CB{
 \noindent \textbf{Acknowledgement.}\
We would like to thank the anonymous referee for carefully reading our manuscript and
many suggestions to improve the overall presentation of our results.}


\end{document}